\theoremstyle{plain}
\newtheorem{thrm}{Theorem}[section]
\newtheorem{lmm}[thrm]{Lemma}
\newtheorem*{prncpl}{Principle}
\newcommand{\Mod}[1]{\ (\mathrm{mod}\ #1)}
\title{Digits of primes}
\author{James Maynard}
\begin{document}

\begin{abstract}
We discuss some different results on the digits of prime numbers, giving a simplified proof of weak forms of a result of Maynard and Mauduit-Rivat.
\end{abstract}



\maketitle
\section{Introduction: Counting primes}
Many problems in prime number theory can be phrased as `given a set $\mathcal{A}$ of integers, how many primes are there in $\mathcal{A}$?'. Two famous examples are whether there are infinitely many primes of the form $n^2+1$, and whether there is always a prime between two consecutive squares. Here `how many' might be asking for at least one prime, whether there are finitely many or infinitely many primes, or an asymptotic estimate for the number of primes in $\mathcal{A}$ of size depending on some parameter $x$.

Typically in analytic approaches to such questions, one tries to count the number of primes in a set $\mathcal{A}$ of a given size. Almost all our approaches rely on the following rough principle, originally due to Vinogradov but with important refinements due to many authors including Fouvry, Friedlander, Harman, Heath-Brown, Iwaniec, Linnik, Vaughan, as well as many others (see \cite{Harman} for more details).
\begin{prncpl}
Given a set of integers $\mathcal{A}\subseteq[1,x]$, you can count the number of primes in $\mathcal{A}$ if you are `good' at counting
\begin{itemize}
\item The number of elements of $\mathcal{A}$ in arithmetic progressions to reasonably large modulus (at least on average).
\item Certain bilinear sums associated with the set $\mathcal{A}$.
\end{itemize}
\end{prncpl}
Here we have been deliberately vague as to what we mean by `good', `reasonably large', or the bilinear sums, since these can vary from application to application. To give a brief indication as to why such a principle might hold, we see that by inclusion-exclusion on the smallest prime factor $P^-(n)$ of $n$ we have
\begin{align*}
\#\{p\in\mathcal{A}:p>x^{1/2}\}&=\#\{n\in\mathcal{A}:P^-(n)>x^{1/4}\}\\
&\qquad-\sum_{x^{1/4}<p\le x^{1/2}}\#\{n\in\mathcal{A}:P^-(n)=p\}.
\end{align*}
The term on the left hand side is counting primes in $\mathcal{A}$. The first term on the right hand side is a quantity that can be estimated by sieve methods, and one can obtain reasonable upper and lower bounds for such quantities if one can estimate $\mathcal{A}$ in arithmetic progressions. The final term on the right hand side is counting products of 2 or 3 primes which lie in $\mathcal{A}$ where all factors are reasonably large, and so can be viewed as a special case of bilinear sums of the form
\[
\sum_{N<n<2N}\sum_{\substack{M<m<M\\ nm\in\mathcal{A}}}\alpha_n\beta_m,
\]
for some complex coefficients $|\alpha_n|, |\beta_m|\le 1$. By iterating such identities, we can therefore obtain combinatorial decompositions for the number of primes in $\mathcal{A}$ which can be estimated by using our knowledge of $\mathcal{A}$ in arithmetic progressions or certain bilinear sums over moderately large variables whose product lies in $\mathcal{A}$.

Typically it is rather harder to estimate the bilinear sums associated to a set $\mathcal{A}$ than it is to estimate $\mathcal{A}$ in arithmetic progressions, and particularly so if $\mathcal{A}$ is sparse in the sense that it contains $O(x^{1-\epsilon})$ elements. (We fail to prove the $n^2+1$ conjecture precisely because we have no way of estimating the bilinear terms- the sparsity of the sequence means that it is insufficient to consider bilinear sums with general coefficients in this case.) Moreover, the condition $nm\in\mathcal{A}$ in the bilinear sums is difficult to handle unless $\mathcal{A}$ has some `multiplicative structure'. This is why the only sparse polynomials known to take infinitely many prime values (such as the result of Friedlander-Iwaniec \cite{FriedlanderIwaniec} on primes of the form $X^2+Y^4$) are closely associated to norm forms which have such multiplicative structure.
\section{Digital properties of primes}
The aim of this article is to discuss some results on primes in sets $\mathcal{A}$ described by their digit properties. Such sets are interesting from a technical point of view since they do not possess an obvious multiplicative structure; the digital properties of two integers $n$ and $m$ are in general not closely related to the digital properties of $nm$, since carries will destroy any simple structure. Nevertheless, we are able to count primes in various sets $\mathcal{A}$ defined in terms of the digits of integers, even in situations when the set $\mathcal{A}$ is sparse.

In particular, for our discussion we focus on the following three results:
\begin{thrm}[Mauduit-Rivat \cite{MauduitRivat}]\label{thrm:Mauduit}
Let $s_b(n)$ denote the sum of digits of $n$ in base $b$.

Then for any fixed choice of $(m,b-1)=1$, we have
\[
\#\{p<x:s_b(p)\equiv a \pmod{m}\}=\frac{(1+o(1))x}{m\log{x}}.
\]
\end{thrm}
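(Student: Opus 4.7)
The plan is to reduce Theorem~\ref{thrm:Mauduit} to estimates on exponential sums over primes. By orthogonality of additive characters modulo $m$,
\[
\#\{p \le x : s_b(p) \equiv a \pmod m\} = \frac{1}{m}\sum_{t=0}^{m-1} e\!\left(-\tfrac{ta}{m}\right) \sum_{p \le x} e\!\left(\tfrac{t \, s_b(p)}{m}\right),
\]
so the $t = 0$ term contributes the main term $\pi(x)/m$ and it suffices to show $\sum_{p \le x} e(\alpha s_b(p)) = o(\pi(x))$ for each $\alpha = t/m$ with $1 \le t \le m-1$. The hypothesis $(m, b-1) = 1$ enters crucially here: because $s_b(n) \equiv n \pmod{b-1}$, if $\alpha$ could take a value of the form $j/(b-1)$ with $j\ne 0$, the exponential would collapse to $e(jn/(b-1))$, a standard Dirichlet-type character, and equidistribution would fail in the form required.

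Following the Vinogradov principle described in the introduction, I would next apply a combinatorial identity (Vaughan's or Heath--Brown's) to decompose $S(\alpha) := \sum_{n \le x}\Lambda(n) e(\alpha s_b(n))$ into Type I sums of the form $\sum_{d \le D}\alpha_d \sum_{n \le x/d} e(\alpha s_b(dn))$ and Type II bilinear sums $\sum_{M < m \le 2M}\sum_{N < n \le 2N}\alpha_m \beta_n \, e(\alpha s_b(mn))$ with $MN \asymp x$ and $M, N$ neither too small nor too large. The Type I sums are dispatched using the strong $b$-multiplicativity $s_b(b^k a + r) = s_b(a) + s_b(r)$ for $0 \le r < b^k$: splitting each $dn$ into top and bottom digit blocks and summing over the bottom block yields a power of the Fourier kernel $\psi_b(\beta) := b^{-1}\sum_{d=0}^{b-1} e(\beta d)$, which is bounded by some $\rho < 1$ whenever $b\alpha \notin \mathbb{Z}$, providing the required power saving.

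The Type II sums are the main obstacle, since carries prevent $s_b(mn)$ from factoring cleanly through $s_b(m)$ and $s_b(n)$. I would overcome this by Fourier-expanding the digit character on the range $[0, b^K)$ with $K \asymp \log_b x$: from the digit factorisation of $s_b$,
\[
e(\alpha s_b(n)) = \sum_{h=0}^{b^K - 1} c_h \, e(h n / b^K), \qquad c_h = \prod_{j=0}^{K-1} \psi_b(\alpha - h b^{j-K}).
\]
Substituting converts each Type II sum into $\sum_h c_h \sum_{m,n} \alpha_m \beta_n e(h m n / b^K)$, i.e.\ a bilinear sum in standard additive characters modulo $b^K$, and Cauchy--Schwarz plus orthogonality give square-root cancellation in $m, n$ for each nonzero $h$. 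What remains is the Diophantine task of establishing a nontrivial $\ell^1$-type bound on $(c_h)_h$, equivalently showing that the product defining $c_h$ is small for all but a sparse set of $h$. Since $|\psi_b(\beta)|$ is bounded away from $1$ except when $\beta$ lies close to an integer, this reduces to counting how often $\alpha - h b^{j-K}$ can simultaneously be close to $\mathbb{Z}$ over many $j$; the constraint $(m, b-1) = 1$ keeps $\alpha$ uniformly bounded away from the resonant values $\{j/(b-1)\}$, limiting the exceptional $h$. Combining this Fourier estimate with the bilinear savings gives $S(\alpha) = o(x/\log x)$, and partial summation then yields the theorem.
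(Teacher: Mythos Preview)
First, note that the paper does \emph{not} prove Theorem~\ref{thrm:Mauduit} in full; it cites Mauduit--Rivat for that and only proves the weak form, Theorem~\ref{thrm:WeakMauduit}, valid for sufficiently large $b$. So the relevant comparison is with that argument.

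The paper's route is organisationally different from yours, and the difference matters. Rather than apply a Vaughan/Heath--Brown decomposition to $\sum_{n}\Lambda(n)e(\alpha s_b(n))$ directly, the paper Fourier--inverts \emph{first}:
\[
\sum_{n<b^k}\Lambda(n)g(n)=\frac{1}{b^k}\sum_{0\le a<b^k}\widehat{g}_{b^k}\Bigl(\frac{a}{b^k}\Bigr)\widehat{\Lambda}_{b^k}\Bigl(\frac{-a}{b^k}\Bigr),
\]
and then runs the circle method on $a/b^k$. The Vaughan identity is used only \emph{inside} the black-box Vinogradov bound on $\widehat{\Lambda}_{b^k}(\theta)$ (Lemma~\ref{lmm:PrimeSum}). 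This means the argument never sees $s_b(dn)$ or $s_b(mn)$ at all; the digit function is confined to $\widehat{g}$, whose product structure gives clean $L^1$ (Lemmas~\ref{lmm:L1Bound}--\ref{lmm:ExtendedTypeI}) and $L^\infty$ (Lemma~\ref{lmm:LInfBound}) bounds. On minor arcs one pairs the $L^1$ bound for $\widehat{g}$ with the pointwise Vinogradov saving for $\widehat{\Lambda}$; on major arcs one uses the uniform bound $|\widehat{g}(\theta)|\ll b^{k(1-\delta_{\alpha,b})}$.

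Your sketch, by contrast, has two concrete gaps. For Type~I you invoke $s_b(b^ka+r)=s_b(a)+s_b(r)$, but your sum involves $s_b(dn)$ for \emph{arbitrary} $d\le D$, not $d$ a power of $b$; splitting $dn$ into digit blocks does not factor the sum because carries between blocks are uncontrolled. For Type~II, after your Fourier expansion the inner sum is $\sum_{m,n}\alpha_m\beta_n\,e(hmn/b^K)$, and the claim that ``Cauchy--Schwarz plus orthogonality give square-root cancellation for each nonzero $h$'' is false: the standard bilinear bound depends on the Diophantine type of $h/b^K$, and for major-arc $h$ (for instance $h$ small) the sum can be as large as $MN$. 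So you cannot decouple the $\ell^1$ bound on $(c_h)$ from the bilinear estimate; a major/minor arc split in $h$ is forced, and once you do that you have recreated the paper's circle-method structure, only less cleanly. Finally, even with these fixes the $\ell^1$ bound $\sum_h|c_h|\ll (C\log b)^k$ (this is exactly Lemma~\ref{lmm:L1Bound}) only yields a power saving when $b$ is large; recovering all $b\ge 2$ requires the additional carry-propagation/van der Corput machinery of the original Mauduit--Rivat paper, which neither your sketch nor the present paper supplies.
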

In particular, the result of Mauduit-Rivat shows that asymptotically $50\%$ of primes have an even sum of digits, and $50\%$ of primes have an odd sum of digits when viewed in base 10.
\begin{thrm}[Bourgain \cite{Bourgain}]\label{thrm:Bourgain}
Let $c>0$ be a sufficiently small constant, and $k$ be a sufficiently large integer. Then for any index set $\mathcal{I}\subseteq\{1,\dots,k\}$ with $\#\mathcal{I}\le ck$, and for any choice of base-2 digits $\epsilon_i\in\{0,1\}$ we have
\[
\#\Bigl\{p=\sum_{i=0}^kn_i2^i:\,n_i\in\{0,1\},\,n_j=\epsilon_j\text{ for }j\in\mathcal{I}\Bigr\}=(1+o(1))\frac{2^{k-\#\mathcal{I}}}{\log{2^k}}.
\]
\end{thrm}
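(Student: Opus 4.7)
My plan is to apply the Vinogradov principle directly: I would decompose the von Mangoldt function via a combinatorial identity (Vaughan's or Heath-Brown's) into Type I and Type II pieces and treat each using Fourier analysis that exploits the product structure of $\mathbf{1}_{\mathcal{A}}$. Writing $S:=\{0,\ldots,k\}\setminus\mathcal{I}$ for the set of free digit positions and $a_0:=\sum_{j\in\mathcal{I}}\epsilon_j 2^j$, the indicator of $\mathcal{A}$ has Fourier transform
\[
\widehat{\mathbf{1}_{\mathcal{A}}}(\theta)=e(a_0\theta)\prod_{j\in S}\bigl(1+e(2^j\theta)\bigr),\qquad |\widehat{\mathbf{1}_{\mathcal{A}}}(\theta)|=2^{|S|}\prod_{j\in S}|\cos(\pi 2^j\theta)|.
\]
Since $|S|\ge(1-c)k$, the product involves many factors and one expects strong cancellation whenever $\theta$ is not well-approximated by a rational with power-of-$2$ denominator.

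For the Type I input I would count elements of $\mathcal{A}$ in arithmetic progressions modulo $d\le D$ with $D$ as large as the argument requires. Expanding the congruence in additive characters modulo $d$ reduces this to bounding the exponential sum
\[
\sum_{n\in\mathcal{A}}e(an/d)=e(a_0 a/d)\prod_{j\in S}2\cos(\pi 2^j a/d),\qquad (a,d)=1.
\]
For odd $d$ with $\mathrm{ord}_d(2)=\ell$, the arguments $\{2^j a/d \bmod 1\}$ cycle through orbits of size $\ell$, and on any such orbit the values cannot all simultaneously be close to an integer; hence $\prod_{j\in S}|\cos(\pi 2^j a/d)|\le(1-\eta)^{|S|/\ell}$ for some absolute $\eta>0$. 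This yields power savings whenever $\ell=o(|S|)$, and general $d$ is handled by splitting off powers of $2$ and reducing to the odd case.

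For the Type II input I would apply Cauchy--Schwarz in the outer variable to reduce the bilinear sum to an average of correlations,
\[
\sum_{m_1,m_2\sim M}\beta_{m_1}\overline{\beta_{m_2}}\sum_{n\sim N}\mathbf{1}_{\mathcal{A}}(nm_1)\mathbf{1}_{\mathcal{A}}(nm_2),
\]
and then expand each indicator via Fourier on $\mathbb{Z}/Q\mathbb{Z}$ with $Q=2^{k+1}$. Off-diagonal frequencies with $m_1\theta_1+m_2\theta_2\not\equiv 0$ contribute negligibly after summing over $n$, so the problem reduces to a quantitative non-concentration estimate on the line $m_2\theta_2\equiv -m_1\theta_1$: for most $(m_1,m_2)$ the mass $\sum|\widehat{\mathbf{1}_{\mathcal{A}}}(\theta_1)\widehat{\mathbf{1}_{\mathcal{A}}}(\theta_2)|$ restricted to this line should be small relative to the trivial diagonal bound $|\mathcal{A}|^2$.

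The Type II step is where essentially all the difficulty lies, and is where I would expect to spend the bulk of the work. The structural reason is that the map $n\mapsto\mathrm{digits}(nm)$ scrambles the binary representation in a way depending on $m$, so the Fourier spectra of $\mathbf{1}_{\mathcal{A}}(\cdot\,m_1)$ and $\mathbf{1}_{\mathcal{A}}(\cdot\,m_2)$ must be forced to decorrelate for generic $(m_1,m_2)$ despite having no multiplicative relation. Turning this intuition into a quantitative bound demands a delicate harmonic analytic study along orbits of the doubling map and, in Bourgain's argument, sum-product style information about the high-spectrum of $\widehat{\mathbf{1}_{\mathcal{A}}}$; it is precisely this step that forces $c$ to be small, since one needs sufficiently many free digits to guarantee that, along each constraint line and for each $\theta_1$, at least one of the two Fourier factors decays rapidly.
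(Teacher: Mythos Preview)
Your plan diverges from the paper's treatment. The paper does not attack bilinear sums over $\mathcal{A}$ directly; it runs the circle method,
\[
\sum_{n}\Lambda(n)\mathbf{1}_{\mathcal{A}}(n)=\frac{1}{2^{k+1}}\sum_{a}\widehat{\mathbf{1}}_{\mathcal{A}}\Bigl(\frac{a}{2^{k+1}}\Bigr)\widehat{\Lambda}\Bigl(\frac{-a}{2^{k+1}}\Bigr),
\]
and on the minor arcs pairs the classical Vinogradov bound for $\widehat{\Lambda}(\theta)$ (Vaughan's identity enters only inside the exponential sum over primes, as in Lemma~\ref{lmm:PrimeSum}) with an $L^1$-type bound for $\widehat{\mathbf{1}}_{\mathcal{A}}$ of the same shape as Lemmas~\ref{lmm:L1Bound}--\ref{lmm:ExtendedTypeI}, exhibiting better-than-square-root cancellation on average. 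Thus \emph{no Type~II information about $\mathcal{A}$ is required at all}: the only inputs from the digit side are $L^1$ and $L^\infty$ control of $\widehat{\mathbf{1}}_{\mathcal{A}}$. The sole extra ingredient beyond the proofs of Theorems~\ref{thrm:WeakMauduit} and~\ref{thrm:WeakMaynard} is that when many prescribed digits lie among the final positions one must invoke the zero-free region and zero-density estimates for Dirichlet $L$-functions to smooth moduli on the major arcs.

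Your Type~II proposal, by contrast, demands control of $\sum_n\mathbf{1}_{\mathcal{A}}(nm_1)\mathbf{1}_{\mathcal{A}}(nm_2)$, placing the bilinear burden on the digit set---precisely the obstruction the introduction flags as hard for sets without multiplicative structure. The non-concentration estimate you would need along the line $m_2\theta_2\equiv -m_1\theta_1$ is left entirely heuristic, and there is no sum-product component in the argument the paper (or Bourgain, as described here) uses. Genuine Type~II estimates of the flavour you sketch \emph{are} what drives the full Theorem~\ref{thrm:Maynard} for small bases, but Bourgain's theorem as treated in this paper avoids them: with a $(1-c)$-proportion of free digits the set is only polynomially sparse, and $L^1$ control of $\widehat{\mathbf{1}}_{\mathcal{A}}$ combined with Vinogradov already closes the circle method. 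Your route is therefore both harder than necessary and, as written, incomplete at its most difficult step.
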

The result of Bourgain shows that you can prescribe a positive proportion of the binary digits (of numbers with $k$ digits), and regardless of which digits are prescribed we can still find primes with the prescribed digits.

\begin{thrm}[Maynard \cite{Maynard}]\label{thrm:Maynard} 
There are constants $0<c<C$ such that for any choice of $a_0\in \{0,\dots,b-1\}$ and for any $x>b\ge 10$, we have
\[
c\frac{x^{\log(b-1)/\log{b}}}{\log{x}}\le \#\{p<x:\,p\text{ has no base $b$ digit equal to $a_0$}\}\le C\frac{ x^{\log(b-1)/\log{b}}}{\log{x}}.
\]
\end{thrm}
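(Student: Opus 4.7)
The set $\mathcal{A}$ of positive integers with no base-$b$ digit equal to $a_0$ contains $\asymp (b-1)^k = (b^k)^{\log(b-1)/\log b}$ elements below $b^k$, so the target count $x^{\log(b-1)/\log b}/\log x$ is precisely of order $|\mathcal{A} \cap [1,x]|/\log x$. This is what the principle stated in the introduction predicts, and the plan is to implement that principle: establish equidistribution of $\mathcal{A}$ in arithmetic progressions to obtain the upper bound via a standard sieve, and then extract the matching lower bound from a combinatorial identity that additionally requires control of bilinear sums indexed by elements of $\mathcal{A}$.

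The central structural input is a Fourier factorization. For $\theta \in \mathbb{R}/\mathbb{Z}$, write
\[
F_k(\theta) = \sum_{\substack{n < b^k \\ n \in \mathcal{A}}} e(n\theta) = \prod_{j=0}^{k-1} G(b^j \theta), \qquad G(\alpha) = \sum_{\substack{0 \le d < b \\ d \neq a_0}} e(d\alpha).
\]
I would first prove an $\ell^\infty$ bound of the form $|F_k(h/q)| \ll (b-1)^k q^{-\delta}$ for coprime $h, q$ with $\gcd(q,b)=1$ and $q \le b^{\gamma k}$. The ingredients are the bound $|G(\alpha)| \le b-1$, strict away from $\alpha \in \mathbb{Z}$ with quantitative decay, together with the observation that the map $\alpha \mapsto b\alpha$ shuffles $h/q$ through rationals of denominator $q$, so for many indices $j$ the factor $|G(b^j h/q)|$ is bounded away from its peak. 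Orthogonality then turns this into the level of distribution estimate
\[
\#\{n \in \mathcal{A} \cap [1, x] : n \equiv a \pmod q\} = \frac{|\mathcal{A} \cap [1,x]|}{q} \bigl( 1 + O(q^{-\delta}) \bigr),
\]
for $(a,q)=1$, $\gcd(q,b)=1$, $q \le x^\gamma$, the $b$-part of the modulus being handled directly from the digit constraint on the low digits.

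Given this, the upper bound in Theorem~\ref{thrm:Maynard} follows from a Selberg-type sieve applied to $\mathcal{A} \cap [1,x]$ with sieve level $x^{1/2-\epsilon}$. For the lower bound I would iterate Buchstab's identity to decompose $\#\{p \in \mathcal{A} \cap [1,x]\}$ into sieved terms (controlled by the AP estimate) and bilinear sums
\[
\sum_{N < n \le 2N} \sum_{M < m \le 2M} \alpha_n \beta_m \mathbf{1}_{\mathcal{A}}(nm), \qquad NM \asymp x,
\]
with $|\alpha_n|, |\beta_m| \le 1$ and $N, M$ each at least a fixed power of $x$. These are bounded by expanding $\mathbf{1}_\mathcal{A}(nm)$ through the Fourier coefficients of $F_k$, applying Cauchy--Schwarz in $m$, and exploiting the factorization of $F_k$ together with the $\ell^\infty$ bound above.

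The hard part will be controlling these bilinear sums across the full admissible range of $N$ and $M$. Since $|\mathcal{A} \cap [1,x]| \asymp x^{\alpha}$ with $\alpha = \log(b-1)/\log b < 1$, the trivial error from Fourier expansion is of size $x$, which dwarfs $|\mathcal{A}|$. The cancellation extracted from $|F_k(\theta)|$ must therefore exceed the sparsity gap $x^{1-\alpha}$, and this forces a careful minor-arc analysis of the Diophantine behaviour of the orbit $\{b^j \theta\}_{j \ge 0}$ on $\mathbb{R}/\mathbb{Z}$, especially when one of $N$ or $M$ lies near the boundary of the admissible range. This is the heart of Maynard's argument, and this is the step at which a weak form of the theorem would plausibly accept worse constants or mild restrictions on $a_0$.
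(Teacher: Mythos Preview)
The paper does not prove Theorem~\ref{thrm:Maynard}; it is quoted as a result of \cite{Maynard} and only the weak form, Theorem~\ref{thrm:WeakMaynard} (an asymptotic for $b$ sufficiently large), is proved here. So there is no proof in this paper to compare your proposal against directly, and any comparison must be with the paper's treatment of the weak version.

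That said, your route and the paper's route to the weak statement are genuinely different. You propose the sieve architecture sketched in the paper's introduction: Type~I information from the Fourier bound $|F_k(h/q)|\ll (b-1)^k q^{-\delta}$ feeds a Selberg upper bound, and Buchstab iteration reduces the lower bound to bilinear (Type~II) sums over $\mathcal{A}$, handled by Cauchy--Schwarz and Fourier expansion. This is the strategy of Maynard's original paper \cite{Maynard}. The present paper instead runs a direct circle-method argument: it writes $\sum_{n<b^k}\Lambda(n)\mathbf{1}_{\mathcal{B}}(n)$ as $b^{-k}\sum_a \widehat{\mathbf{1}}_{\mathcal{B},b^k}(a/b^k)\widehat{\Lambda}_{b^k}(-a/b^k)$, bounds $\widehat{\Lambda}$ on minor arcs via Vaughan's identity (Lemma~\ref{lmm:PrimeSum}), and pairs this with $L^1$, large-sieve, hybrid, and $L^\infty$ estimates for $\widehat{\mathbf{1}}_{\mathcal{B},b^k}$ (Lemmas~\ref{lmm:L1Bound}--\ref{lmm:LInfBound}). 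The bilinear input is thus absorbed entirely into the classical prime exponential sum, and no Type~II estimate over $\mathcal{A}$ is ever needed. The cost is that the argument only closes when the exponent $\alpha_b=\log(Cb\log b/(b-1))/\log b$ drops below $1/5$, i.e.\ for $b$ large; your sieve approach, fully executed as in \cite{Maynard}, reaches $b\ge 10$ but requires the much harder Type~II analysis you flag at the end.

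Your plan is correct in outline and matches \cite{Maynard}, but be aware that the bilinear step you describe in one sentence (``Cauchy--Schwarz in $m$ and the $\ell^\infty$ bound'') is not nearly enough on its own: the sparsity gap $x^{1-\alpha}$ forces one to exploit not just pointwise Fourier decay but averaged (moment) bounds for $F_k$ over structured sets of frequencies, together with a delicate geometry-of-numbers argument, and this is where most of the work in \cite{Maynard} lies.
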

In particular, this shows that there are infinitely many primes which have no digit equal to 7 in their decimal expansion.

We aim to give a unified treatment to simplified forms of these results, emphasising the properties we are using to count primes. In particular, we aim to give fairly complete proofs of Theorem \ref{thrm:Mauduit} and Theorem \ref{thrm:Maynard} in the case when the base $b$ is a sufficiently large constant, and give a sketch of Bourgain's result when working with a base $b$ which is sufficiently large. Specifically we prove the following:
\begin{thrm}[Weak Theorem \ref{thrm:Mauduit}]\label{thrm:WeakMauduit}
Let $b$ be sufficiently large. Let $s_b(n)$ denote the sum of digits of $n$ in base $b$.

Then for any fixed choice of $(m,b-1)=1$, we have
\[
\#\{p<b^k:\,s_b(p)\equiv a \pmod{m}\}=\frac{(1+o(1))b^k}{mk\log{b}}.
\]
as $k\rightarrow \infty$ through the integers.
\end{thrm}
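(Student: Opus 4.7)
The plan is to apply the Vinogradov principle of the introduction to the set $\mathcal{A} = \{n < b^k : s_b(n) \equiv a \pmod{m}\}$. By orthogonality modulo $m$,
\[
\mathbf{1}[s_b(n) \equiv a \pmod{m}] = \frac{1}{m}\sum_{h=0}^{m-1} e\Bigl(\frac{h(s_b(n)-a)}{m}\Bigr),
\]
so counting primes $p < b^k$ in $\mathcal{A}$ reduces to the $h=0$ contribution, which yields the main term $\pi(b^k)/m \sim b^k/(mk\log b)$ by the Prime Number Theorem, plus the exponential sums $S_h := \sum_{p<b^k} e(h s_b(p)/m)$ for $1 \le h \le m-1$; it suffices to show each $S_h = o(b^k/k)$. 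The hypothesis $(m,b-1)=1$ is essential here: since $s_b(n) \equiv n \pmod{b-1}$, any common factor of $m$ and $b-1$ would introduce a character-type obstruction to equidistribution.

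The weight $f_h(n) := e(h s_b(n)/m)$ is \emph{digit-multiplicative}: writing $n = \sum_j n_j b^j$, we have $f_h(n) = \prod_j e(h n_j/m)$. The single-digit Fourier average
\[
\widehat{\rho}(h) := \frac{1}{b}\sum_{i=0}^{b-1} e(hi/m)
\]
satisfies $|\widehat{\rho}(h)| \le m/(2b)$ by the standard geometric sum estimate, and this is the crucial small quantity when $b$ is large compared to $m$. To estimate $S_h$, I would apply a Vaughan-type identity to decompose $\sum_{n \le b^k} \Lambda(n) f_h(n)$ into Type I sums $\sum_{d \le D} \alpha_d \sum_\ell f_h(d\ell)$ and Type II (bilinear) sums $\sum_{u \sim U}\sum_{v \sim V} \alpha_u \beta_v f_h(uv)$ with $UV \asymp b^k$. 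For the Type I sums, multiplication by a small $d$ distorts the digits of $\ell$ only in a controlled way, so the product structure of $f_h$ essentially survives and one extracts a factor of $|\widehat{\rho}(h)|$ per unaffected digit position, which is strong for large $b$.

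The genuine obstacle is the Type II bilinear estimate, since $s_b(uv)$ has no clean relationship to $s_b(u)$ and $s_b(v)$ due to the carries in multiplication. After Cauchy--Schwarz in the outer variable one is led to estimate
\[
\sum_{v_1, v_2} \beta_{v_1}\overline{\beta_{v_2}} \sum_{u \sim U} e\Bigl(\frac{h(s_b(uv_1) - s_b(uv_2))}{m}\Bigr),
\]
where genuine cancellation in the inner sum is needed for most pairs $v_1 \ne v_2$. The strategy, following Mauduit--Rivat and simplified in the large-$b$ regime, is to pass to a truncated digit sum that ignores the low-order digits contaminated by carries, and then to show that as $u$ varies, the high-order digits of $uv_1$ and $uv_2$ behave independently enough that the per-digit bound $|\widehat{\rho}(h)| \le m/(2b)$ can be compounded across many positions. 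The factor of $b$ gained per digit is what beats the square-root loss from Cauchy--Schwarz, and controlling the carry propagation and quantifying the digit-level independence of $uv_1$ and $uv_2$ is the heart of the argument.
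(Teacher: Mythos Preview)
Your reduction via orthogonality to bounding $\sum_{n<b^k}\Lambda(n)\,e(hs_b(n)/m)$ for $1\le h\le m-1$ matches the paper, but from that point you take a genuinely different route. You propose to apply Vaughan's identity \emph{directly to the weighted sum} and then treat the resulting Type~II sums by the Mauduit--Rivat carry-propagation analysis of $s_b(uv_1)-s_b(uv_2)$. The paper instead runs the circle method: it writes
\[
\sum_{n<b^k}\Lambda(n)g(n)=\frac{1}{b^k}\sum_{0\le a<b^k}\widehat{g}_{b^k}\Bigl(\frac{a}{b^k}\Bigr)\widehat{\Lambda}_{b^k}\Bigl(-\frac{a}{b^k}\Bigr)
\]
and splits the $a$-sum into major and minor arcs via Dirichlet's theorem. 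On the minor arcs it combines the \emph{standard} Vinogradov bound for $\widehat{\Lambda}_{b^k}(\theta)$ (Lemma~\ref{lmm:PrimeSum}) with $L^1$-type averages of $\widehat{g}_{b^k}$ over well-spaced points (Lemmas~\ref{lmm:L1Bound}--\ref{lmm:ExtendedTypeI}), all of which come straight from the product formula $\widehat{g}_{b^k}(\theta)=\prod_{i=0}^{k-1}\sum_{n_i<b}e(n_i(\alpha+b^i\theta))$; on the major arcs it uses the uniform pointwise bound $|\widehat{g}_{b^k}(\theta)|\ll b^{k(1-\delta_{\alpha,b})}$ of Lemma~\ref{lmm:LInfBound}. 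So the paper never analyzes $s_b(uv)$ or carries at all: the digital structure is encoded entirely in $\widehat{g}$, and Vaughan's identity is used only inside the classical sum $\sum_n\Lambda(n)e(n\theta)$. Your route is essentially the original Mauduit--Rivat argument and would in principle work for every base $b\ge 2$, at the cost of the substantially harder Type~II analysis you sketch; the paper's circle-method route is much lighter but relies on $b$ being large so that the exponent $\beta_b=\log(C\log b)/\log b$ from the $L^1$ bounds is below $1/5$.
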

\begin{thrm}[Weak Theorem \ref{thrm:Maynard}]\label{thrm:WeakMaynard}
Let $b$ be sufficiently large. Then for any choice of $a_0\in \{0,\dots,b-1\}$ and for any $x>b$, we have
\[
\#\{p<b^k:\,p\text{ has no base $b$ digit equal to $a_0$}\}=\frac{(\kappa_b(a_0)+o(1))(b-1)^k}{k\log{b}},
\]
as $k\rightarrow \infty$ through the integers, where
\[
\kappa_b(a_0)=\begin{cases}
\displaystyle\frac{b}{b-1},\qquad&\text{if $(a_0,b)\ne 1$,}\\
\displaystyle\frac{b(\phi(b)-1)}{(b-1)\phi(b)},&\text{if $(a_0,b)=1$.}
\end{cases}
\]
\end{thrm}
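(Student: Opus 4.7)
The plan is to follow the framework of the preceding principle. Let $\mathcal{A} = \{n < b^k : a_0 \text{ does not appear as a base-}b \text{ digit of } n\}$, so $|\mathcal{A}| = (b-1)^k$. I would establish Type I estimates on the distribution of $\mathcal{A}$ in arithmetic progressions and Type II estimates on bilinear sums over $\mathcal{A}$, and combine them via a combinatorial identity (Vaughan, Heath-Brown, or Harman's sieve) to extract an asymptotic for the number of primes in $\mathcal{A}$. Since the sparsity exponent $\log(b-1)/\log b$ tends to $1$ as $b \to \infty$, the hypothesis that $b$ is large means $\mathcal{A}$ is only mildly sparse, and both types of estimates have enough slack to be proved by Fourier analysis alone.

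The crucial input is a Fourier decomposition of $\mathbf{1}_\mathcal{A}$. Writing $n = \sum_{i=0}^{k-1} n_i b^i$, one has
\[
\mathbf{1}_\mathcal{A}(n) = \prod_{i=0}^{k-1}\bigl(1 - \mathbf{1}[n_i = a_0]\bigr),
\]
so the discrete Fourier transform on $\mathbb{Z}/b^k\mathbb{Z}$ factorizes as
\[
\widehat{\mathbf{1}_\mathcal{A}}(r) = \prod_{i=0}^{k-1} F_i(r/b^k), \qquad F_i(\theta) = \sum_{d\in\{0,\ldots,b-1\}\setminus\{a_0\}} e(d b^i \theta).
\]
If $r = b^v r'$ with $(r', b) = 1$, the factors with $i \ge k-v$ equal $b - 1$, while each of the remaining $k - v$ factors satisfies $|F_i(r/b^k)| \le 1 + (2\|b^i r/b^k\|)^{-1}$ from the geometric sum bound. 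Consequently $\widehat{\mathbf{1}_\mathcal{A}}$ decays substantially in $r$; Parseval gives an $L^2$-mean of order $(b-1)^{k/2}$, a saving of $(b/(b-1))^{k/2}$ over the trivial bound.

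The Type I estimate counts $\#\{n \in \mathcal{A} : n \equiv a \pmod q\}$ by expanding in additive characters mod $q$. The main term $|\mathcal{A}| \rho_\mathcal{A}(a;q)/q$ has local density $\rho_\mathcal{A}(a;q)$ equal to $1+o(1)$ whenever $(q,b)=1$ and depends on the admissible last digits modulo $\gcd(q,b)$ otherwise. The error is controlled by the Fourier bound above and holds for $q \le b^{\eta k}$ with some $\eta > 0$. The Type II estimate handles bilinear sums
\[
\sum_{N < n \le 2N} \sum_{M < m \le 2M} \alpha_n \beta_m \mathbf{1}_\mathcal{A}(nm), \qquad |\alpha_n|,|\beta_m|\le 1,
\]
by Fourier inversion of $\mathbf{1}_\mathcal{A}(nm)$: the $r = 0$ frequency gives the main term $|\mathcal{A}| b^{-k} \sum \alpha_n \sum \beta_m$, and for $r \ne 0$ a Cauchy--Schwarz in $m$ followed by the standard Weyl-type bound on $\sum_n e(-n(m-m') r/b^k)$ yields enough cancellation whenever $r/b^k$ has large denominator, which by the Fourier decay of $\widehat{\mathbf{1}_\mathcal{A}}$ is the dominant case. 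Plugging Type I and Type II into a Heath-Brown decomposition of $\Lambda$ supported on $\mathcal{A}$ then produces the asymptotic.

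The constant $\kappa_b(a_0)$ emerges from the resulting singular-series expression $\prod_p \rho_p$. For $p \nmid b$, the local density $\rho_p$ is $1$ since $\mathcal{A}$ distributes uniformly modulo $p$; the non-trivial contribution comes entirely from primes $p \mid b$, where the residue of $n \in \mathcal{A}$ modulo $p$ is determined by the last digit $n_0 \in \{0,\ldots,b-1\}\setminus\{a_0\}$. Comparing the number of admissible last digits coprime to $b$ against the total $b-1$ gives $\kappa_b(a_0) = b/(b-1)$ when $(a_0,b) \ne 1$ (all $\phi(b)$ prime-relevant residues remain available) and an extra factor $(\phi(b)-1)/\phi(b)$ when $(a_0,b) = 1$ (one coprime-to-$b$ digit is lost). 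The principal obstacle is the Type II bilinear estimate: $\mathcal{A}$ has no multiplicative structure, and carries in base-$b$ arithmetic completely scramble the digits of $nm$ relative to those of $n$ and $m$. For general $b$ this is the crux of Maynard's original argument and requires delicate second-moment Fourier work; for $b$ sufficiently large, however, the closeness of $\log(b-1)/\log b$ to $1$ means the basic Fourier/Parseval bound described above already suffices.
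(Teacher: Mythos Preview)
Your approach is workable but is organized differently from the paper's. The paper applies the circle method directly: it writes
\[
\sum_{n<b^k}\Lambda(n)\mathbf{1}_\mathcal{B}(n)=\frac{1}{b^k}\sum_{0\le a<b^k}\widehat{\mathbf{1}}_{\mathcal{B},b^k}\Bigl(\frac{a}{b^k}\Bigr)\widehat{\Lambda}_{b^k}\Bigl(\frac{-a}{b^k}\Bigr),
\]
bounds $\widehat{\Lambda}_{b^k}$ pointwise on the minor arcs by the classical Vinogradov--Vaughan estimate (Lemma~\ref{lmm:PrimeSum}), and evaluates the major arcs via the prime number theorem in progressions (Lemmas~\ref{lmm:MajorError}, \ref{lmm:PrimeMajor}). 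You instead decompose $\Lambda$ into Type~I/II pieces first and Fourier-expand $\mathbf{1}_\mathcal{A}$ inside each piece. These are essentially the same computation reordered---the Vinogradov bound is itself proved by Vaughan's identity---but the paper's arrangement has the advantage of quoting the prime exponential sum as a black box, so that all the new work sits cleanly on the digit side (Section~\ref{sec:Fourier}).

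Two points to tighten. First, the decisive estimate is not Parseval but the $L^1$-type bound $\sum_{a}|\widehat{\mathbf{1}}_{\mathcal{B},b^k}(a/b^k)|\ll (Cb\log b)^k$ and its large-sieve and hybrid refinements (Lemmas~\ref{lmm:L1Bound}--\ref{lmm:ExtendedTypeI}); this is precisely the ``better than square-root cancellation on average'' that makes the binary circle-method problem tractable, whereas Cauchy--Schwarz with the $L^2$ bound alone loses a factor $(b/(b-1))^{k/2}$ and fails. Second, in your Type~II sketch the main term is not just $r=0$: frequencies with $r/b^k$ close to $\ell/d$ for $d\mid b$ have $\widehat{\mathbf{1}_\mathcal{A}}$ of full size $(b-1)^k$ and a non-cancelling bilinear sum, so they are major-arc contributions rather than something ``Fourier decay'' handles. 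Your singular-series computation of $\kappa_b(a_0)$ shows you understand this mod-$b$ bias, but the Type~II error term must be set up to reflect it (e.g.\ by comparing with a model set that already matches $\mathcal{A}$ modulo $b$).
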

In both cases, extra ideas are required to show the full theorem which go beyond a simple refinement of the method presented here. Nevertheless, the proofs of Theorems \ref{thrm:WeakMauduit} and \ref{thrm:WeakMaynard} still contain most of the qualitatively important aspects, which also appear in Theorem \ref{thrm:Bourgain}. The overall approach has much in common with work of Drmota, Mauduit and Rivat on the sum of digits function for polynomials \cite{Drmota}.
\section{Sum of digits of primes}
Any number $n$ whose sum of digits $s_{10}(n)$ in base 10 is a multiple of 3 must itself be a multiple of 3. More generally, in base $b$
\[
n=\sum_{i}n_i b^i\equiv \sum_i n_i=s_b(n)\pmod{b-1},
\]
so any integer whose sum of digits in base $b$ has a common factor $h$ with $b-1$ must itself be a multiple of $h$. In particular, the sum of digits $s_b(p)$ in base $b$ of a prime number $p>b-1$ must necessarily be coprime to $b-1$.

After a moments thought, it appears difficult to think of any other property linking the sum of digits function with the primes. This might encourage us to guess that if we look at large numbers, the properties `being prime' and `having sum of digits equal to $a$' are roughly independent, apart from the property that $a$ must be coprime with $b-1$. In particular, we might guess that asymptotically $50\%$ of prime numbers have an even sum of digits, and $50\%$ have an odd sum of digits, or that $50\%$ of primes have sum of digits which is $1\Mod{3}$ and $50\%$ a sum of digits which is $2\Mod{3}$ (since 3 is the only prime with sum of digits $0\Mod{3}$). Theorem \ref{thrm:Mauduit} is confirming this guess.
\section{Restricted  digits of primes}
Many basic and fundamental properties of primes can be recast as problems about their digits. The problem of how many primes there are in a short interval $[x,x+x^\theta]$ is essentially equivalent to asking whether there are primes with $\lceil\log{x}/\log{10}\rceil$ digits which have a specified string of $\lceil \theta\log{x}/\log{10}\rceil$ numbers as their leading decimal digits. Similarly, asking for primes with $k$ decimal digits which end in a specific string of $\theta k$ digits is asking for primes less than $10^k$ in an arithmetic progression modulo $10^{\theta k}$. The question as to whether there are infinitely Mersenne primes is equivalent to asking whether there are infinitely many primes with no digit 0 in their base $2$ expansion.

The final decimal digit of a prime number larger than 10 must be $1$, $3$, $7$ or $9$, since any integer ending in $0$, $2$, $4$, $5$, $6$ or $8$ must be a multiple of $2$ or $5$. More generally, the final digit in base $b$ of a prime $p>b$ must be coprime to $b$. Beyond this condition, however, it is not clear that there is any simple property of individual digits of primes which is constrained in any way. We might guess that for any large set $\mathcal{A}\subseteq[X,2X]$ defined only in terms of base $b$ digital properties and containing only integers with last digit coprime to $b$, the density of primes in $\mathcal{A}$ is the same as the density of primes in the set of all integers in $[X,2X]$ which have last digit coprime to $b$. This density is $\asymp b/(\phi(b)\log{X})$, so we might guess that
\[
\#\{p\in\mathcal{A}\}\approx \frac{b}{\phi(b)\log{X}}\#\mathcal{A}
\]
in this case. Theorems \ref{thrm:Bourgain} and \ref{thrm:Maynard} confirm this heuristic for the set $\mathcal{A}$ of integers with some prescribed binary digits, or the set of integers with a digit missing in their decimal expansion. (Note that $\#\mathcal{A}\approx X^{\log{(b-1)}/\log{b}}$ for numbers with no base $b$ digit equal to $a_0$.)
\section{Fourier Analysis on digit functions}
The proofs of Theorems \ref{thrm:Mauduit}-\ref{thrm:Maynard} are Fourier-analytic in nature, and ultimately rely on the fact that many digit-related functions are very well controlled by their Fourier transform. Given a function $f:\mathbb{Z}\rightarrow\mathbb{C}$, we define the Fourier transform $\widehat{f}_x:\mathbb{R}/\mathbb{Z}\rightarrow\mathbb{C}$ of $f$ restricted to $[0,x]$ by
\[
\widehat{f}_x(\theta):=\sum_{n<x}f(n)e(n\theta).
\]
Here, and throughout the paper, $e(t):=e^{2\pi i t}$ is the complex exponential.

Our weak version of Theorem \ref{thrm:Mauduit} is based on understanding $\widehat{g}_x$ when $g(n)=e(\alpha s_b(n))$ where $\alpha\in\{0,1/m,\dots,(m-1)/m\}$ and $s_b(n)$ is the sum of digits in base $b$. In particular, writing $n=\sum_{i=0}^{k-1}n_ib^{i}$ in its base $b$ expansion, we find
\begin{align*}
\widehat{g}_{b^k}(\theta)&=\sum_{n<b^k}e(n\theta)e(\alpha s_b(n))\\
&=\sum_{0\le n_0,\dots,n_{k-1}< b}e\Bigl(\sum_{i=0}^{k-1} n_i(\alpha+b^{i}\theta)\Bigr)\\
&=\prod_{i=0}^{k-1} \Bigl(\sum_{0\le n_i<b}e(n_i(\alpha+b^{i}\theta))\Bigr)\\
&=\prod_{i=0}^{k-1} \Bigl(\frac{e(b\alpha+b^{i+1}\theta)-1}{e(\alpha+b^{i}\theta)-1}\Bigr).
\end{align*}
Thus $\widehat{g}_{b^k}$ has a product structure, which will be very convenient to work with.

For our weak version of Theorem \ref{thrm:Maynard}, we work with the Fourier transform of the indicator function $\mathbf{1}_{\mathcal{B}}$ of the set $\mathcal{B}$ of integers with no base $b$ digit equal to $a_0$. Similarly to the calculation above, we have
\begin{align*}
\widehat{\mathbf{1}}_{\mathcal{B},b^k}(\theta)&=\sum_{n<b^k}e(n\theta)\mathbf{1}_{\mathcal{B}}(n)\\
&=\sum_{\substack{0\le n_0,\dots,n_{k-1}< b\\ n_i\ne b_0}}e\Bigl(\sum_{i=0}^{k-1} n_i b^{i}\theta\Bigr)\\
&=\prod_{i=0}^{k-1} \Bigl(\frac{e(b^{i+1}\theta)-1}{e(b^{i}\theta)-1}-e(b^{i}a_0\theta)\Bigr).
\end{align*}
Again, we find $\widehat{\mathbf{1}}_{\mathcal{B},b^k}$ has a nice product structure.

\section{Hardy-Littlewood circle method}

Each of the results fundamentally relies on an application of the Hardy-Littlewood circle method. If we wish to count primes in a set $\mathcal{A}\subseteq[1,b^k)$ then by Fourier inversion on $\mathbb{Z}/b^k\mathbb{Z}$, we have
\[
\sum_{n< b^k}\Lambda(n)\mathbf{1}_{\mathcal{A}}(n)
=\frac{1}{b^k}\sum_{0\le a<b^k}\widehat{\mathbf{1}}_{\mathcal{A},b^k}\Bigl(\frac{a}{b^k}\Bigr)\widehat{\Lambda}_{b^k}\Bigl(\frac{-a}{b^k}\Bigr),\]
where $\Lambda(n)$ is the Von Mangoldt function.

Heuristically, we expect that $\widehat{\Lambda}(\theta)$ is large only if $\theta$ is close to a rational with small denominator. Similarly, we expect that if $\mathcal{A}$ is defined by base-$b$ digital properties, then $\widehat{\mathbf{1}_{\mathcal{A}}}(\theta)$ will be large only if $\theta$ is close to a real whose base $b$ expansion has specific properties (such as having many 0's in the base $b$ expansion). We might hope that there should be a large contribution to the count of primes in $\mathcal{A}$ only from those $a$ for which $\widehat{\Lambda}(-a/b^k)$ and $\widehat{1}_{\mathcal{A}}(a/b^k)$ are both simultaneously large, which would occur if $a/b^k$ is close to a rational with small denominator and has specific base $b$ properties. We would expect there to be no special features to the base $b$ expansion of a rational $a/q$ unless $q$ is a divisor of a small power of $b$. Thus we might hope that the main contribution is from those $a$ such that $a/b^k$ is close to a rational with denominator a small power of $b$.

Given the above heuristic, we split the contribution up depending on whether $a/b^k$ is close to a rational with small denominator or not. (This distinguishes between those $a$ when $\widehat{\Lambda}(a/b^k)$ is large or not.) For the $a$'s for which $a/b^k$ is close to a rational with small denominator (known as the `major arcs'), we obtain an asymptotic estimate for $\widehat{\Lambda}(-a/b^k)$ and for $\widehat{\mathbf{1}_{\mathcal{A}}}(a/b^k)$ by counting primes and elements of $\mathcal{A}$ in arithmetic progressions. For those $a$ where $a/b^k$ is not close to a rational of small denominator (the `minor arcs'), we wish to obtain a suitable upper bound for the contribution to show such terms do not contribute much to our count. This relies on a $L^\infty$ bound for $\hat{\Lambda}$ and a $L^1$ bound for $\widehat{\mathbf{1}_{\mathcal{A}}}$ for such $a$.

For counting primes with restricted digits, we apply this directly with $\mathcal{A}$ being the set $\mathcal{B}$ of integers with no digit equal to $a_0$. For the case of the sum of digits of primes, we perform a small variation to simplify things. We we see that if $\mathcal{A}$ is the set of integers $n$ with $s_b(n)\equiv a\Mod{m}$ then recalling $g(n)=e(\alpha s_b(n))$, we have
\[
\widehat{\mathbf{1}}_{\mathcal{A},x}(\theta)=\frac{1}{m}+\frac{1}{m}\sum_{\alpha\in\{1/m,\dots,(m-1)/m\}}e(-a\alpha)\widehat{g}_{x}(\theta),
\]
so
\[
\sum_{\substack{n<b^k\\ s_b(n)\equiv a\Mod{m}}}\Lambda(n)=\frac{\sum_{n<b^k}\Lambda(n)}{m}+\sum_{\substack{\alpha=j/m\\ 1\le j\le m-1}}\frac{e(-a\alpha)}{mb^k}\sum_{0\le a<b^k}\widehat{g}_{b^k}\Bigl(\frac{a}{b^k}\Bigr)\widehat{\Lambda}_{b^k}\Bigl(-\frac{a}{b^k}\Bigr).
\]
For a general set $\mathcal{A}$ one cannot hope to prove pointwise bounds better than $\widehat{\mathbf{1}}_{\mathcal{A},x}(\theta)\ll x^{1/2}$ (so-called `square-root cancellation') since even if $\mathbf{1}_{\mathcal{A}}$ and $e(n\theta)$ behaved as completely independent random variables taking values on the complex unit circle, we would expect the sum to typically be of size $x^{1/2}$. This would mean that it would be impossible to show a bound better than $O(b^k)$ for the contribution of the minor arcs, and so an application of the circle method would be doomed to failure. (This issue, which affects all binary problems, is one reason why the circle method is not effective in the twin prime of Goldbach conjectures, for example.) It turns out that the Fourier transforms of digital functions are much more structured, and so typically have \textit{better} than square-root cancellation. This allows us to succeed in using the circle method to these `binary' problems.
\section{Exponential sums for primes}\label{sec:ExponentialSums}
We first establish our key result for exponential sums over primes. This shows that whenever $\theta$ is far from a rational with small denominator, $\widehat{\Lambda}(\theta)$ is small. The bounds here are well-known, based on the original work of Vinogradov but we give a complete proofs for completeness following the approach of Vaughan (see, for example \cite[Chapter 25]{Davenport}).
\begin{lmm}\label{lmm:Equidistribution}
Let $\alpha=a/d+\beta$ with $a,d$ coprime integers and $\beta\in\mathbb{R}$ satisfying $|\beta|<1/d^2$. Then we have
\[\sum_{n=1}^N\min\Bigl(M,\|\alpha n\|^{-1}\Bigr)\ll \Bigl(N+N M d|\beta|+\frac{1}{d|\beta|}+d\Bigr)\log{N}.\]
\end{lmm}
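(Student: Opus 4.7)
My plan is to follow the classical Vinogradov--Vaughan approach, splitting $[1,N]$ into blocks of length $d$ and tracking the $\beta$-dependence carefully via the Dirichlet approximation.

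\emph{Setup.} Writing $n = qd + r$ with $q \in \{0, 1, \dots, \lfloor N/d\rfloor\}$ and $r \in \{1, \dots, d\}$, one has $\alpha n \equiv \{qd\beta\} + ar/d + \beta r \pmod 1$. Since $(a,d) = 1$, as $r$ varies the residues $ar \pmod d$ permute $\{0, 1, \dots, d-1\}$, and the perturbation $\beta r$ has modulus at most $d|\beta| \leq 1/d$, which is at most the grid spacing. Hence within each block the $d$ values $\|\alpha n\|$ are distances from the block shift $\gamma_q := \{qd\beta\}$ to an equispaced grid of $d$ points in $[0,1)$, up to $O(1/d)$ perturbations.

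\emph{Reduction to level sets.} By dyadic decomposition,
\[\sum_{n=1}^N \min(M, \|\alpha n\|^{-1}) \ll M A_{1/M} + \sum_{j : 2^{-j} > 1/M} 2^{j+1} A_{2^{-j}},\]
where $A_\eta := \#\{n \in [1,N] : \|\alpha n\| \leq \eta\}$. I would estimate $A_\eta$ in two ways and take the minimum. First, the block decomposition immediately gives $A_\eta \ll (N/d + 1)(\eta d + 1)$. Second, for a sharper $\beta$-bound, rewrite $\|\alpha n\| \leq \eta$ as the existence of an integer $m \equiv an \pmod d$ with $|m + \beta n d| \leq \eta d$, and group by $m$: for each such $m$ the valid $n$'s form an arithmetic progression with common difference $d$ inside a real interval of length $2\eta/(d|\beta|)$, yielding at most $\eta/(d^2|\beta|) + 1$ values; the range of contributing $m$ has length $O(Nd|\beta| + \eta d)$. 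Multiplying, $A_\eta \ll (Nd|\beta| + \eta d + 1)(\eta/(d^2|\beta|) + 1)$.

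\emph{Combining.} Substituting the appropriate bound into the dyadic sum, each term of the target appears: $NMd|\beta|$ from $M \cdot Nd|\beta|$ at the smallest scale $\eta = 1/M$; the $1/(d|\beta|)$ term from $\sum_j 2^{j+1} \cdot \eta^2/(d|\beta|) \ll 1/(d|\beta|)$ summed geometrically; the $N$ term (more precisely $(N/d)\log N \leq N\log N$) from the $N\eta/d$ cross-terms; and $d$ from the $\eta d$ boundary pieces summed over $O(\log N)$ dyadic scales. The overall $\log N$ factor arises from the number of dyadic levels.

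\emph{Main obstacle.} The main difficulty is obtaining $1/(d|\beta|)$ rather than the weaker $1/|\beta|$; a naive use of the block bound $d$ for the count of ``bad'' residue classes (instead of the sharper $Nd|\beta|$ from the second bound) would cost exactly this factor of $d$. A secondary subtlety arises when $M > 1/(d|\beta|)$, where the naive estimate produces an apparent extra $M$ term; this is handled by observing that $\|\alpha n\| \geq \min(d|\beta|, 1/d - N|\beta|)$ for all $n \leq N$, which forces $A_{1/M}$ to be very small (often zero) in this regime, so the apparent loss is illusory.
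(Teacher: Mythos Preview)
Your level-set/dyadic approach is a genuine alternative to the paper's direct block decomposition, but there is a concrete arithmetic slip that breaks the argument as written.

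\textbf{The error.} From $|m+\beta n d|\le \eta d$ one gets $n$ in a real interval of length $2\eta/|\beta|$, not $2\eta/(d|\beta|)$. Intersecting with the progression $n\equiv a^{-1}m\pmod d$ therefore gives at most $\eta/(d|\beta|)+1$ values of $n$, not $\eta/(d^2|\beta|)+1$. With this correction your second bound becomes
\[
A_\eta \ll (Nd|\beta|+\eta d+1)\Bigl(\frac{\eta}{d|\beta|}+1\Bigr),
\]
and expanding produces a cross term $\eta d\cdot \eta/(d|\beta|)=\eta^2/|\beta|$. Summed dyadically this yields $\sum_j 2^{j+1}\cdot 2^{-2j}/|\beta|\ll 1/|\beta|$, which is precisely the factor-of-$d$ loss you flag in your ``Main obstacle'' paragraph. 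So the mechanism you propose for avoiding $1/|\beta|$ does not actually avoid it.

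\textbf{The secondary claim.} Your lower bound $\|\alpha n\|\ge \min(d|\beta|,\,1/d-N|\beta|)$ is only a positive bound when $N|\beta|<1/d$; in the complementary regime $Nd|\beta|\gtrsim 1$ it is vacuous, and indeed $\|\alpha n\|$ can be far smaller than $d|\beta|$ (take $n$ a multiple of $d$ with $\|n\beta\|$ small). So your treatment of the stray $M$-term only covers one of the two regimes.

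\textbf{Comparison with the paper.} The paper sidesteps both issues by splitting from the outset into the cases $Nd|\beta|<1/2$ and $Nd|\beta|>1/2$. In the first case it writes $n=n_0+dn_1$ and separates $n_0=0$ (which gives the $1/(d|\beta|)$ term directly, since then $\|\alpha n\|=\|dn_1\beta\|=dn_1|\beta|$) from $n_0\ne 0$ (where $\|\alpha n\|\ge \|n_0a/d\|/2$, giving the $N+d$ terms). In the second case it uses a three-index decomposition $n=n_0+dn_1+d\lfloor (d^2\beta)^{-1}\rfloor n_2$ to get $(MNd|\beta|+N)\log N$. Your approach can almost certainly be repaired along similar lines---e.g.\ in the first regime note that for $\eta<1/(2d)$ only $m=0$ contributes, collapsing your second bound to $A_\eta\le \eta/(d|\beta|)+1$, and in the second regime the target already absorbs an $M$-term since $NMd|\beta|\ge M/2$---but you will end up doing the same case split the paper does, and the ``Combining'' paragraph needs to be redone accordingly.
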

\begin{proof}
If $Nd|\beta|<1/2$ then we let $n=n_0+d n_1$ for non-negative integers $n_0,n_1$ with $n_0<d$ and $n_1<N/d$. If $n_0\ne 0$ then 
\[\|\alpha n\|=\|n_0a/d+ \beta n\|\ge \|n_0a/d\|-\|\beta n\|\ge \|n_0a/d\|/2\]
since $N|\beta|<1/2d$. We let $b\in\{0,\dots,d-1\}$ be such that $b\equiv n_0a\Mod{d}$. Thus the terms with $n_0\ne 0$ contribute a total
\[\ll \sum_{n_1<N/d}\sum_{1\le b<\min(d,N)}\frac{d}{b}\ll \sum_{n_1<N/d}d\log{N}\ll (N+d)\log{N}.\]
The terms with $n_0=0$ contribute
\[\ll \sum_{1\le n_1<N/d}\min\Bigl(M,\|d n_1\beta\|^{-1}\Bigr)\ll \sum_{1\le n_1<N/d}\frac{1}{d n_1 |\beta|}\ll \frac{\log{N}}{d|\beta|}.\]
Here we have used the fact that since $n_0=0$ and we sum over $n\ge 1$ we must have $n_1\ge 1$.

We now consider the case $Nd|\beta|>1/2$. We let $n=n_0+d n_1+d\lfloor(d^2\beta)^{-1}\rfloor n_2$, with $0\le n_0<d$, $0\le n_1\le (d^2\beta)^{-1}$ and $0\le n_2\ll N d \beta$. Thus we obtain
\[\sum_{n=1}^N\min\Bigl(M, \|\alpha n\|^{-1}\Bigr)\ll\sum_{\substack{n_1\le 1/d^2\beta\\ n_2\ll Nd\beta}}\sum_{0\le n_0<d}\min\Bigl(M,\Bigl\|\theta+n_1d\beta+n_0(a/d+\beta)\Bigr\|^{-1}\Bigr)\]
where we have put $\theta=\beta d \lfloor(d^2\beta)^{-1}\rfloor n_2$ for convenience. The inner sum is of the form $\sum_i\min(M,\|\theta_i\|^{-1})$ for $d$ points $\theta_i$ which are $1/2d$ separated. Therefore the sum over $n_0$ is
\begin{align*}
&\ll d\log{d}+\sup_{0\le n_0<d}\min\Bigl(M,\|\theta+n_1d\beta+n_0(a/d+\beta)\|^{-1}\Bigr)\\
&\ll d\log{d}+\min\Bigl(M,\frac{d}{\|d\theta+(n_1+O(1))d^2\beta\|}\Bigr)
\end{align*}
since $\|t\|^{-1}\le d\|dt\|^{-1}$ for all $t$. The term $d\log{d}$ contributes $\ll N\log{N}$ to the total sum, which is acceptable. Thus we are left to bound
\[\sum_{n_2\ll Md\beta}\sup_{\substack{\theta\in\mathbb{R}}}\sum_{n_1\le 1/d^2\beta}\min\Bigl(M,\frac{d}{\|d\theta+(n_1+O(1))d^2\beta\|}\Bigr).\]
 The inner sum is of the form ($O(1)$ copies of) $\sum_i\min(M,d\|\theta_i\|^{-1})$ for $O(1/d^2\beta)$ points $\theta_i$ which are $d^2\beta$-separated $\mod{1}$. Therefore the inner sum is $\ll(M+d/d^2\beta)\log{N}$, and this gives a bound
\[\ll Nd|\beta|\Bigl(M+\frac{1}{d|\beta|}\Bigr)\log{N}\ll \Bigl(MN d|\beta|+N\Bigr)\log{N}.\]
Putting these bounds together gives
\[\sum_{n=1}^N\min\Bigl(M,\|\alpha n\|^{-1}\Bigr)\ll \Bigl(N+NM d|\beta|+\frac{1}{d|\beta|}+d\Bigr)\log{N},\]
as required.
\end{proof}
\begin{lmm}\label{lmm:PrimeSum}
Let $\alpha=a/d+\beta$ with $(a,d)=1$ and $|\beta|<1/d^2$. Then
\[\widehat{\Lambda}_x(\alpha)=\sum_{n<x}\Lambda(n)e(n\alpha)\ll \Bigl(x^{4/5}+\frac{x^{1/2}}{|d\beta|^{1/2}}+x|d\beta|^{1/2}\Bigr)(\log{x})^4.\]
\end{lmm}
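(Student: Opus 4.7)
The plan is to apply Vaughan's identity to decompose $\widehat{\Lambda}_x(\alpha)$ into Type I (linear) and Type II (bilinear) pieces, and bound each by reduction to Lemma \ref{lmm:Equidistribution}. Taking parameters $U=V=x^{2/5}$ in the standard form of Vaughan's identity (as in \cite[Chapter 25]{Davenport}), we rewrite $\sum_{n<x}\Lambda(n)e(n\alpha)$ as a combination of $O(\log^2 x)$ dyadic pieces, each of which is either a Type I sum
\[
T_I=\sum_{m\le M}a_m\sum_{n\le x/m}e(mn\alpha),\qquad M\le x^{2/5},
\]
with $|a_m|\ll \tau(m)\log m$, or a Type II sum
\[
T_{II}=\sum_{M<m\le 2M}\sum_{N<n\le 2N}a_m b_n\, e(mn\alpha),\qquad x^{2/5}\le M,N\le x^{3/5},\ MN\asymp x,
\]
with $a_m,b_n$ bounded by divisor-type functions.

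The Type I pieces are handled by the geometric series bound $|\sum_{n\le x/m}e(mn\alpha)|\ll\min(x/m,\|m\alpha\|^{-1})$; after a dyadic decomposition in $m$ and absorbing $|a_m|$ into a $(\log x)^{O(1)}$ factor, Lemma \ref{lmm:Equidistribution} (with summation length $M$ and truncation level $x/M$) gives
\[
T_I\ll\Bigl(M+xd|\beta|+\tfrac{1}{d|\beta|}+d\Bigr)(\log x)^{O(1)},
\]
which lies within the target since $M\le x^{2/5}$, $|\beta|<1/d^2$, and the factor $1/(d|\beta|)$ is absorbed into $x^{1/2}/|d\beta|^{1/2}$ whenever $|d\beta|\ge 1/x$ (the complementary range making the claimed bound trivial). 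For the Type II pieces, Cauchy--Schwarz in $m$ gives $|T_{II}|^2\le (\sum_m|a_m|^2)\sum_m |\sum_n b_n e(mn\alpha)|^2$. The first factor is $O(M(\log x)^{O(1)})$; in the second, expanding the square and swapping summation collapses the inner geometric sum $\sum_{m\sim M}e(mh\alpha)$ with $h=n-n'$, weighted by the $O(N)$ pairs $(n,n')$ producing each $h$ and a diagonal contribution of size $O(NM)$. Applying Lemma \ref{lmm:Equidistribution} to the resulting sum over $|h|\le N$ with truncation level $M$ yields
\[
|T_{II}|^2\ll\Bigl(M^2N+MN^2+M^2N^2 d|\beta|+\tfrac{MN}{d|\beta|}+MNd\Bigr)(\log x)^{O(1)},
\]
and taking square roots with $MN\asymp x$ and $\max(M,N)\le x^{3/5}$ produces the desired shape $x^{4/5}+x|d\beta|^{1/2}+x^{1/2}/|d\beta|^{1/2}+(xd)^{1/2}$; the final summand is absorbed into $x^{1/2}/|d\beta|^{1/2}$ using $|\beta|<1/d^2$.

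The main technical obstacle is organising Vaughan's identity so that every piece truly falls into the Type I regime with $M\le x^{2/5}$ or the Type II regime with both variables in $[x^{2/5},x^{3/5}]$; once this decomposition is in place, both bounds follow essentially mechanically from Lemma \ref{lmm:Equidistribution} by the calculations above. The choice $U=V=x^{2/5}$ is exactly what balances the $\sqrt{x}\max(M,N)^{1/2}$ loss in the Type II estimate against the range restriction in the Type I estimate to produce the $x^{4/5}$ exponent, and the $|d\beta|^{1/2}$-shaped terms come out naturally of the square-rooting in the Type II step combined with the precise form of Lemma \ref{lmm:Equidistribution}. A careful tracking of the divisor and dyadic losses at each step accounts for the $(\log x)^4$ factor.
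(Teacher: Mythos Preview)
Your proposal is correct and follows essentially the same route as the paper: both invoke the Vaughan-type decomposition from \cite[Chapter 25]{Davenport} with $U=V=x^{2/5}$, reduce the Type I and Type II pieces to sums of $\min(\cdot,\|\cdot\|^{-1})$, and feed these into Lemma \ref{lmm:Equidistribution}. One small correction: with $U=V=x^{2/5}$ the Type I variable in Davenport's formulation actually ranges up to $UV=x^{4/5}$ (not $x^{2/5}$), which is where the $x^{4/5}$ term in the Type I bound comes from---this does not affect your final estimate since the Type II contribution already produces $x^{4/5}$.
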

\begin{proof}
From \cite[(6), Page 142]{Davenport}, taking $f(n)=e(n\alpha)$ we have that for any choice of $U,V\ge 2$ with $UV\le x$
\begin{align*}
\sum_{n<x}\Lambda(n)e(n\alpha)&\ll U+(\log{x})\sum_{1\le t<UV}\sup_w\Bigl|\sum_{w<r\le x/t}e(rt\alpha)\Bigr|\\
&+x^{1/2}(\log{x})^3\sup_{\substack{U\le M\le x/V\\ V\le j\le x/M}}\Bigl(\sum_{V<k<x/M}\Bigl|\sum_{\substack{M<m\le 2M\\ m\le x/k\\ m\le x/j}}e(\alpha m (j-k))\Bigr|\Bigr)^{1/2}.
\end{align*}
The sum over $r$ is clearly $\ll \min(x/t,\|t\alpha\|^{-1})$ and the sum over $m$ is similarly $\ll \min(M,\|(j-k)\alpha\|^{-1})$. Putting $t$ and $j-k$ into dyadic intervals and applying Lemma \ref{lmm:Equidistribution} to the resulting sums (or the trival bound when $j=k$) gives a bound
\[\ll \Bigl(UV+xd|\beta|+\frac{1}{d|\beta|}+d+\frac{x}{U^{1/2}}+\frac{x}{V^{1/2}}+x|d\beta|^{1/2}+\frac{x^{1/2}}{|d\beta|^{1/2}}+x^{1/2}d^{1/2}\Bigr)(\log{x})^4.\]
Choosing $U=V=x^{2/5}$ and simplifying the terms then gives the result.
\end{proof}
\section{\texorpdfstring{$L^1$ and $L^\infty$ Fourier bounds for digit functions}{L1 and L infinity Fourier bounds for digit functions}}\label{sec:Fourier}
We now establish in turn several properties of the functions $\widehat{\mathbf{1}}_{\mathcal{B},b^k}$ and $\widehat{g}_{b^k}$, which are the key ingredients in our results. The key property is that although either of these Fourier transforms can occasionally be large, they are typically very small. In particular, if we average over a set which doesn't have particular digital structure (such as all points in an interval, or approximations to rationals) then we obtain good bounds, which show better than square-root cancellation on average provided that the base $b$ is large enough. Estimates of these types are the standard means of gaining control over digit-related functions. Lemmas \ref{lmm:L1Bound} and \ref{lmm:TypeI} should be compared with \cite[Th\'eor\`eme 2]{FouvryMauduit}, \cite[Th\'eor\`eme 2]{DartygeMauduit}, \cite[Lemme 16]{MauduitRivat} or \cite{Konyagin}, whilst Lemma \ref{lmm:LInfBound} is essentially given by \cite[Section 3]{ErdosMauduitSarkozy} or \cite[Theorem 2]{MauduitSarkozy}. 
\begin{lmm}[$L^1$ bounds]\label{lmm:L1Bound}
There exists a constant $C\ll 1$ such that
\[\sup_{\theta\in\mathbb{R}}\sum_{0\le a< b^k}\Bigl|\widehat{\mathbf{1}}_{\mathcal{B},b^k}\Bigl(\theta+\frac{a}{b^k}\Bigr)\Bigr|\ll (C b\log{b})^k,\]
and
\[
\sup_{\theta\in\mathbb{R}}\sum_{0\le a< b^k}\Bigl|\widehat{g}_{b^k}\Bigl(\theta+\frac{a}{b^k}\Bigr)\Bigr|\ll (C b\log{b})^k.
\]
\end{lmm}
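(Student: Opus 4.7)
The plan is to exploit the product structure of both Fourier transforms derived at the end of Section~5, combined with an iterative summation over the base-$b$ digits of $a$. Write $a = \sum_{j=1}^{k} c_j b^{k-j}$ with $c_j \in \{0,\dots,b-1\}$, so that $a/b^k = 0.c_1 c_2 \dots c_k$ in base $b$. The key observation is that $b^i \cdot (a/b^k) \equiv 0.c_{i+1}c_{i+2}\dots c_k \pmod{1}$, because the digits $c_1,\dots,c_i$ produce only integer contributions which disappear inside $e(\cdot)$. Consequently, when we substitute $\theta + a/b^k$ into either product formula, the $i$-th factor becomes a function $F$ evaluated at $\phi_i := \psi_i + 0.c_{i+1}\dots c_k$, where $\psi_i$ depends on $\theta, \alpha, a_0$ but \emph{not} on $a$. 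Crucially, the digit $c_{i+1}$ is the leading one in $0.c_{i+1}\dots c_k = c_{i+1}/b + 0.0 c_{i+2}\dots c_k$, so it appears only in the $i$-th factor.

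This separation lets us sum one digit at a time. First sum over $c_1$: it appears only in the $i=0$ factor, so this sum is bounded by $M := \sup_{\psi \in \mathbb{R}}\sum_{c=0}^{b-1} |F(\psi + c/b)|$, independently of the other $c_j$. After pulling out this factor $M$, the digit $c_2$ now appears only in the $i=1$ factor (since $c_1$ is gone), so summing over $c_2$ gives another $M$. Iterating $k$ times,
\[
\sup_\theta \sum_{0\le a<b^k}\Bigl|\widehat{\mathbf{1}}_{\mathcal{B},b^k}\Bigl(\theta + \frac{a}{b^k}\Bigr)\Bigr| \le M^k,
\]
and likewise for $\widehat{g}_{b^k}$.

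It remains to prove $M \ll b\log b$. For both cases, the relevant factor $F(\phi)$ equals $H(\phi) := (e(b\phi)-1)/(e(\phi)-1)$ minus either $0$ or $e(a_0\phi)$, so $|F(\phi)| \le |H(\phi)| + 1$. Using the identity $H(\phi) = \sum_{j=0}^{b-1} e(j\phi)$ one has $|H(\phi)| \le \min(b, 1/(2\|\phi\|))$. The $b$ points $\psi + c/b$ for $c = 0, \dots, b-1$ are exactly $1/b$-separated on $\mathbb{R}/\mathbb{Z}$, so after sorting their distances to $0$ are $\gamma_0, \gamma_1, \dots$ with $\gamma_r \gg r/b$ for $r\ge 1$. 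Capping the smallest term by $b$ and summing,
\[
\sum_{c=0}^{b-1} \min\Bigl(b, \frac{1}{2\|\psi + c/b\|}\Bigr) \ll b + \sum_{r=1}^{b/2} \frac{b}{r} \ll b\log b,
\]
uniformly in $\psi$, hence $M = O(b\log b)$.

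The calculation is largely bookkeeping; the conceptual point that could be subtle is the telescoping structure, namely that summing over $c_{j+1}$ at the appropriate stage costs only one factor of $M$ rather than a larger quantity. This is what allows the bound to stay close to $b^k$ (up to a $(\log b)^k$ loss) instead of ballooning to $b^{2k}$ or worse, and it is ultimately a reflection of the fact that the product structure of $\widehat{\mathbf{1}}_{\mathcal{B},b^k}$ and $\widehat{g}_{b^k}$ interacts cleanly with the base-$b$ expansion of the frequency.
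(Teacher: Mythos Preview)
Your proof is correct and follows essentially the same approach as the paper: exploit the product formula, expand the frequency in base $b$, and observe that each digit contributes a factor $O(b\log b)$ to the sum. The paper packages this by bounding the $i$-th factor pointwise by $\min(b,1+b/t_{i+1}+b/(b-1-t_{i+1}))$, a function of the single digit $t_{i+1}$, so that the sum over $(t_1,\dots,t_k)$ factors completely; your iterative ``peel off one digit and take a supremum'' argument is an equivalent way of recording the same uniformity (note only that your sentence ``$c_{i+1}$ appears only in the $i$-th factor'' is literally false---it appears in factors $0,\dots,i$---but your subsequent inductive summation over $c_1,c_2,\dots$ is stated correctly and this is what matters).
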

We see that the average size of $\widehat{\mathbf{1}}_{\mathcal{B},b^k}$ and $\widehat{g}_{b^k}$ is at most $(C\log{b})^k$. For $b$ large, this is a very small power of $b^k$, and so the typical size is significantly smaller that $b^{k/2}$, which would correspond to square-root cancellation.
\begin{proof}
We recall the product formulae for $\widehat{\mathbf{1}}_{\mathcal{B}}$ and $\hat{g}$:
\begin{align*}
\widehat{\mathbf{1}}_{\mathcal{B},b^k}(t)&=\prod_{i=0}^{k-1}\Bigl(\frac{e(b^{i+1}t)-1}{e(b^{i} t)-1}-e(a_0 b^{i} t)\Bigr),\\
\widehat{g}_{b^k}(t)&=\prod_{i=0}^{k-1}\Bigl(\frac{e(b^{i+1}t+b\alpha)-1}{e(b^{i} t+\alpha)-1}\Bigr).
\end{align*}
The terms in parentheses can be bounded by
\begin{align}
\Bigl|\frac{e(b^{i+1}t)-1}{e(b^{i}t)-1}-e(a_0 b^{i} t)\Bigr|\le \min\Bigl(b,1+\frac{1}{\|b^{i} t\|}\Bigr),\label{eq:LittleSum}\\
\Bigl|\frac{e(b^{i+1}t+b\alpha)-1}{e(b^{i} t+\alpha)-1}\Bigr|\le \min\Bigl(b,\frac{1}{\|b^{i}t+\alpha\|}\Bigr).
\end{align}
For $t\in[0,1)$, we write $t=\sum_{i=1}^{k} t_i b^{-i}+\epsilon$ with $t_1,\dots,t_k\in\{0,\dots,b-1\}$ and $\epsilon\in[0,1/b^k)$. We see that $\|b^i t\|^{-1}=\|t_{i+1}/b+\epsilon_i\|^{-1}$ for some $\epsilon_i\in[0,1/b)$. In particular, $\|b^i t\|^{-1}\le b/t_{i+1}+b/(b-1-t_{i+1})$ if $t_{i+1}\ne 0,b-1$. Thus we see that
\begin{align*}
\sup_{\theta\in\mathbb{R}}\sum_{0\le a <b^k}\Bigl|\widehat{\mathbf{1}}_{\mathcal{B},b^k}\Bigl(\theta+\frac{a}{b^k}\Bigr)\Bigr|&\ll \sum_{t_1,\dots,t_k<b}\prod_{i=1}^k\min\Bigl(b,1+\frac{b}{t_i}+\frac{b}{b-1-t_i}\Bigr)\\
&\ll \prod_{i=1}^k \Bigl(3b+2\sum_{1\le t_i\le (b-1)/2}\frac{b}{t_i}\Bigr)\\
&\ll (C b\log{b})^k,
\end{align*}
and that exactly the same argument applies for bounding $\hat{g}_{b^k}$.
\end{proof}
\begin{lmm}[Large sieve estimates]\label{lmm:TypeI}
There is a constant $C\ll1$ such that
\begin{align*}
\sup_{\theta\in\mathbb{R}}\sum_{d\sim D}\sum_{\substack{0<\ell<d\\ (\ell,d)=1}}\sup_{|\epsilon|<\frac{1}{10D^2}}\Bigl|\widehat{\mathbf{1}}_{\mathcal{B},b^k}\Bigl(\frac{\ell}{d}+\theta+\epsilon\Bigr)\Bigr|&\ll (D^2+b^k)(C\log{b})^k,\\
\sup_{\theta\in\mathbb{R}}\sum_{d\sim D}\sum_{\substack{0<\ell<d\\ (\ell,d)=1}}\sup_{|\epsilon|<\frac{1}{10D^2}}\Bigl|\widehat{g}_{b^k}\Bigl(\frac{\ell}{d}+\theta+\epsilon\Bigr)\Bigr|&\ll (D^2+b^k)(C\log{b})^k.
\end{align*}
\end{lmm}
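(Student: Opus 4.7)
The plan is to convert the supremum over $\epsilon$ into integrals of $F := \widehat{\mathbf{1}}_{\mathcal{B},b^k}$ and of $F'$ via a Sobolev-type inequality, then bound each integral by exploiting the product structure and iid digit averaging, as in Lemma \ref{lmm:L1Bound}. Set $\delta = 1/(10 D^2)$. Since distinct Farey fractions $\ell/d, \ell'/d'$ with $d,d' \in [D, 2D)$ satisfy $|\ell/d - \ell'/d'| \ge 1/(dd') \ge 1/(4D^2) > 2\delta$, the intervals $I_{\ell,d} := [\ell/d + \theta - \delta, \ell/d + \theta + \delta]$ are pairwise disjoint modulo $1$. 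The elementary inequality
\[
\sup_{\xi \in I}|F(\xi)| \le \frac{1}{|I|}\int_I|F(\xi')|\,d\xi' + \int_I|F'(\xi')|\,d\xi',
\]
obtained by combining $|F(\xi)|\le|F(\xi')|+\int_I|F'|$ with an average over $\xi' \in I$, reduces the task to proving $\int_0^1|F(\xi)|\,d\xi \ll (C\log b)^k$ and $\int_0^1|F'(\xi)|\,d\xi \ll b^k(C\log b)^k$.

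The first integral bound follows directly from Lemma \ref{lmm:L1Bound}: Fubini gives $\int_0^1|F(\xi)|\,d\xi = \int_0^{1/b^k}\sum_{0\le a<b^k}|F(a/b^k+\tau)|\,d\tau$, and by the lemma the inner sum is $\ll (Cb\log b)^k$ uniformly in $\tau$, so the integral is $\ll (Cb\log b)^k/b^k = (C\log b)^k$.

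For the derivative, differentiating the product formula from Section \ref{sec:Fourier} yields
\[
F'(\xi) = 2\pi i \sum_{j=0}^{k-1} b^j H(b^j\xi) \prod_{i\ne j} G(b^i\xi),\qquad H(t) := \sum_{\substack{0\le n<b\\ n\ne a_0}} n\,e(nt),
\]
where $G(t) = (e(bt)-1)/(e(t)-1) - e(a_0 t)$ is the single-digit factor. Abel summation gives $|H(t)| \ll b\cdot \min(b,1/\|t\|)$, so each term in the sum over $j$ is dominated by $b^{j+1}\prod_i B(b^i\xi)$ with $B(t):=\min(b,1+1/\|t\|)$. Applying the argument of Lemma \ref{lmm:L1Bound} to each term gives a contribution $\ll b^{j+1}(C\log b)^k$; summing the geometric series $\sum_{j=0}^{k-1} b^{j+1} \ll b^k$ yields $\int_0^1|F'(\xi)|\,d\xi \ll b^k(C\log b)^k$. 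Inserting the two integral bounds completes the proof of the desired estimate. The case of $\widehat{g}_{b^k}$ is identical: its product formula has the same structure, and the additive shift by $\alpha$ affects neither the separation of the $I_{\ell,d}$ nor the digit averaging, both being translation invariant.

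The main obstacle is the derivative estimate, since differentiating a product of $k$ factors brings out a weight $b^j$ that can be as large as $b^{k-1}$. The saving is that in the product-rule expansion exactly one such factor appears per term, so the total weight $\sum_j b^j$ is only $O(b^k)$, precisely what is needed to match the trivial $b^k$ contribution in the target estimate.
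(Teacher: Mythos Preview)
Your proof is correct and follows essentially the same route as the paper: both use Gallagher's Sobolev-type inequality to pass from the supremum over $\epsilon$ to $D^2\int_0^1|F|+\int_0^1|F'|$, exploit the $\gg 1/D^2$ separation of the shifted Farey points, and then bound the two integrals via the product structure and the digit-averaging computation of Lemma~\ref{lmm:L1Bound}. Your use of Abel summation to control the differentiated factor $H(t)$ is a slight refinement over the paper's direct bound, but otherwise the arguments coincide.
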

Again, we see that since we are summing over $D^2$ terms, if $D^2>b^k$ then on average our Fourier transforms are of size $O(b^{\delta k})$ if $b$ is large enough compared with $\delta>0$. 
\begin{proof}
The key point is that the $D^2$ points $\ell/d$ are all fairly evenly spaced in the interval $[0,1]$, so we may approximate this sum by an integral with an error which is controlled by the derivative, a formulation of the large sieve going back to Gallagher \cite{Gallagher}.

We have that for any smooth function $F$
\[F(t)=F(u)-\int_t^u F'(v)dv.\]
Thus integrating over $u\in [t-\delta,t+\delta]$ we have
\[|F(t)|\ll \frac{1}{\delta}\int_{t-\delta}^{t+\delta}|F(u)|du+\int_{t-\delta}^{t+\delta}|F'(u)|du.\]
We note that the fractions $\ell/d+\theta+\epsilon$ with $(\ell,d)=1$, $d<2D$ and $|\epsilon|<1/10D^2$ are separated from one another by $\gg 1/D^2$ . Thus
\[\sum_{d\sim D}\sum_{\substack{0<\ell<d\\ (\ell,d)=1}}\sup_{|\epsilon|<1/10D^2}\Bigl|F\Bigl(\frac{\ell}{d}+\theta+\epsilon\Bigr)\Bigr|\ll D^2\int_0^1|F(u)|du+\int_{0}^{1}|F'(u)|du.\]
We will now apply this with $F=\widehat{g}$ and $F=\widehat{\mathbf{1}}_{\mathcal{B}}$.

We note that, writing $n=\sum_{i=0}^{k-1}n_i b^i$ we have
\begin{align*}
\widehat{\mathbf{1}}_{\mathcal{B},b^k}'(t)&=2\pi i\sum_{n< b^k}n\mathbf{1}_{\mathcal{B}}(n)e(n t)\\
&=2\pi i \sum_{j=0}^{k-1}b^j\Bigl(\sum_{0\le n_j<b}n_j\mathbf{1}_{\mathcal{B}}(n_j)e(n_jb^{j}t)\Bigr)\prod_{i\ne j}\Bigl(\sum_{0\le n_i<b}\mathbf{1}_{\mathcal{B}}(n_i)e(n_ib^it)\Bigr).
\end{align*}
Thus, as in Lemma \ref{lmm:L1Bound}, we have
\[|\widehat{\mathbf{1}}_{\mathcal{B}, b^k}'(t)|\ll \sum_{j=0}^{k-1}b^{j+1}\prod_{i\ne j}\min\Bigl(b,1+\frac{1}{\|b^it\|}\Bigr)\ll b^k\prod_{i=0}^{k-1}\min\Bigl(b,1+\frac{1}{\|b^it\|}\Bigr),\]
and we have the same bound for $|\widehat{\mathbf{1}}_{\mathcal{B},b^k}(t)|$ but without the $b^k$ factor. We let $t=\sum_{j=1}^kt_j b^{-j}+\epsilon$ for some $t_1,\dots,t_k\in\{0,\dots,b-1\}$ and $\epsilon\in[0,1/b^k)$. We see that, as in Lemma \ref{lmm:L1Bound} we have
\begin{align*}
\int_0^1\prod_{i=0}^{k-1}\min\Bigl(b,1+\frac{1}{2\|b^it\|}\Bigr)dt&\ll \frac{1}{b^k}\sum_{t_1,\dots,t_k<b}\prod_{i=0}^{k-1}\Bigl(1+\min\Bigl(b,\frac{b}{2t_i},\frac{b}{2(b-1-t_i)}\Bigr)\Bigr)\\
&\ll (C\log{b})^k.
\end{align*}
Putting this all together then gives the result for $\widehat{\mathbf{1}}_{\mathcal{B},b^k}$. The result for $\hat{g}$ is entirely analogous, since
\begin{align*}
\hat{g}_{b^k}'(t)&=2\pi i \sum_{n<b^k}n e(nt+\alpha s_b(n))\\
&=2\pi i \sum_{j=0}^{k-1}b^j\Bigl(\sum_{n_j<b}n_j e((\alpha+b^j)n_j)\Bigr)\prod_{i\ne j}\Bigl(\sum_{n_i<b}e((\alpha+b^i)n_i)\Bigr),
\end{align*}
and so the same bounds apply.
\end{proof}
\begin{lmm}[Hybrid estimates]\label{lmm:ExtendedTypeI}
Let $B,D\ge 1$. There is a constant $C\ll 1$ such that
\begin{align*}
\sum_{d\sim D}\sum_{\substack{\ell<d\\ (\ell,d)=1}}\sum_{\substack{|\eta|<B\\ b^k\ell/d+\eta\in\mathbb{Z}}}\Bigl|\widehat{\mathbf{1}}_{\mathcal{B},b^k}\Bigl(\frac{\ell}{d}+\frac{\eta}{b^k}\Bigr)\Bigr|&\ll (b-1)^k(D^2B)^{\alpha_b}+D^2B(C\log{b})^k,\\
\sum_{d\sim D}\sum_{\substack{\ell<d\\ (\ell,d)=1}}\sum_{\substack{|\eta|<B\\ b^k\ell/d+\eta\in\mathbb{Z}}}\Bigl|\widehat{g}_{b^k}\Bigl(\frac{\ell}{d}+\frac{\eta}{b^k}\Bigr)\Bigr|&\ll b^k (D^2B)^{\beta_b}+D^2B(C\log{b})^k,
\end{align*}
where
\[\alpha_b=\frac{\log\Bigl(C\frac{b}{b-1}\log{b}\Bigr)}{\log{b}},\qquad \beta_b=\frac{\log\Bigl(C\log{b}\Bigr)}{\log{b}}.\]
\end{lmm}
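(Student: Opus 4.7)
The key input is the product factorization
\[
\widehat{\mathbf{1}}_{\mathcal{B},b^k}(t)=\widehat{\mathbf{1}}_{\mathcal{B},b^{k-j}}(t)\,\widehat{\mathbf{1}}_{\mathcal{B},b^j}(b^{k-j}t),
\]
and the analogous identity for $\widehat{g}$, which follow from the product formulae of Section \ref{sec:Fourier} by grouping the first $k-j$ factors separately from the last $j$. The plan is to choose $j$ so that $b^j$ is comparable to $D^2B$, balancing a trivial $L^\infty$ bound on the ``slow'' factor of length $k-j$ against a Gallagher-type $L^1$ bound on the ``fast'' factor of length $j$.

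In the main regime $D^2B\le b^k$, I would take $j$ with $b^j\in[8D^2B,8bD^2B]$ and use the pointwise bounds $|\widehat{\mathbf{1}}_{\mathcal{B},b^{k-j}}(t)|\le(b-1)^{k-j}$ (respectively $|\widehat{g}_{b^{k-j}}(t)|\le b^{k-j}$). For the fast factor at the shifted argument $s=b^{k-j}\ell/d+\eta/b^j$, consecutive values of $\eta$ give points separated by $1/b^j$, while for different $(d,\ell)$ the rational parts $b^{k-j}\ell/d\bmod 1$ reduce to fractions of denominator $d^*\le d\le 2D$, which by Farey spacing are $\ge 1/(4D^2)>2B/b^j$ apart, so all the intervals $[b^{k-j}\ell/d-B/b^j,b^{k-j}\ell/d+B/b^j]$ are disjoint. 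Applying the Gallagher-type inequality $|F(s)|\ll b^j\int_{s-1/(2b^j)}^{s+1/(2b^j)}|F|+\int_{s-1/(2b^j)}^{s+1/(2b^j)}|F'|$ used in the proof of Lemma \ref{lmm:TypeI}, together with the $L^1$ bounds $\|F\|_{L^1}\ll(C\log b)^j$ and $\|F'\|_{L^1}\ll b^j(C\log b)^j$ for $F\in\{\widehat{\mathbf{1}}_{\mathcal{B},b^j},\widehat{g}_{b^j}\}$ derived in the proofs of Lemmas \ref{lmm:L1Bound}--\ref{lmm:TypeI}, the fast-factor sum is $\ll b^j(C\log b)^j$. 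Combining with the slow-factor bound,
\[
(b-1)^{k-j}\cdot b^j(C\log b)^j=(b-1)^k\Bigl(\tfrac{b(C\log b)}{b-1}\Bigr)^j=(b-1)^kb^{\alpha_bj}\ll(b-1)^k(D^2B)^{\alpha_b},
\]
and analogously $b^{k-j}\cdot b^j(C\log b)^j=b^k(C\log b)^j\ll b^k(D^2B)^{\beta_b}$ for $\widehat{g}$.

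In the complementary regime $D^2B>b^k$, no suitable $j\le k$ exists, so I would instead rewrite the sum as $\sum_m N(m)|\widehat{\mathbf{1}}_{\mathcal{B},b^k}(m/b^k)|$ where $N(m)=\#\{(d,\ell):d\sim D,(\ell,d)=1,|m/b^k-\ell/d|<B/b^k\}$. By Farey spacing of fractions of denominator $\le 2D$, one has $N(m)\ll 1+BD^2/b^k\ll BD^2/b^k$ here, and Lemma \ref{lmm:L1Bound} gives $\sum_m|\widehat{\mathbf{1}}_{\mathcal{B},b^k}(m/b^k)|\ll(Cb\log b)^k$, so the total is $\ll D^2B(C\log b)^k$, matching the second term of the target. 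The first term is automatic in this regime since $(Cb\log b)^k=(b-1)^k(b^k)^{\alpha_b}\le(b-1)^k(D^2B)^{\alpha_b}$ when $D^2B\ge b^k$ (and similarly for the $\widehat{g}$ version).

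The main obstacle will be the bookkeeping when different pairs $(d,\ell)$ collapse to the same reduced fraction $\ell'/d^*$ under the map $\ell/d\mapsto b^{k-j}\ell/d\bmod 1$: such collisions occur only when $d$ has a substantial divisor in common with $b^{k-j}$, and the multiplicity per $(d^*,\ell')$ is controlled by $\tau(b^{k-j})\ll k^{O(1)}$ times a Farey factor. I expect to handle this by isolating the sub-sum over $d\mid b^{k-j}$ (of which there are only $\ll k^{O(1)}$ values of size $\sim D$) and estimating it separately via Lemma \ref{lmm:TypeI}, absorbing any loss into the second term $D^2B(C\log b)^k$. The $\widehat{g}$-argument is otherwise identical, the only change being that the slow-factor trivial bound is $b^{k-j}$ instead of $(b-1)^{k-j}$, which is exactly what converts $\alpha_b$ into $\beta_b$.
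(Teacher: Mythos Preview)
Your overall strategy --- factor the transform via its product structure, bound one factor trivially, and control the other with the $L^1$/Gallagher machinery of Lemmas~\ref{lmm:L1Bound}--\ref{lmm:TypeI} --- is exactly the right one, and matches the paper's. The gap is in how you allocate the factorization.

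With a single split at $j$, your ``fast'' factor is $\widehat{\mathbf{1}}_{\mathcal{B},b^j}$ evaluated at $b^{k-j}\ell/d+\eta/b^j$. You then need the centres $b^{k-j}\ell/d\bmod 1$ to be Farey-separated. When $(d,b)>1$ this fails: the reduced denominator is $d/\gcd(d,b^{k-j})$, and many pairs $(d,\ell)$ collapse to the same centre. Your two suggested repairs do not close this. Isolating $d\mid b^{k-j}$ only catches the pairs landing at the centre $0$; every $d$ with $\gcd(d,b)>1$ (a positive proportion of $d\sim D$) causes trouble, not just those dividing $b^{k-j}$. And the multiplicity at a given centre $\ell^*/d^*$ is not merely $\tau(b^{k-j})$: for each admissible $g\mid b^{k-j}$ with $d^*g\sim D$ there are up to $g$ choices of $\ell$, so the multiplicity at small-denominator centres can be as large as $\asymp D\cdot k^{O_b(1)}$, which costs you an unrecoverable factor of $D$ in the final bound.

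The paper avoids this by splitting into \emph{three} pieces rather than two. One writes, for $k_1+k_2\le k$,
\[
\Bigl|\widehat{\mathbf{1}}_{\mathcal{B},b^k}\Bigl(\tfrac{\ell}{d}+\tfrac{\eta}{b^k}\Bigr)\Bigr|
\le (b-1)^{k-k_1-k_2}\,\Bigl|\widehat{\mathbf{1}}_{\mathcal{B},b^{k_1}}\Bigl(\tfrac{b^{k-k_1}\ell}{d}+\tfrac{\eta}{b^{k_1}}\Bigr)\Bigr|\,
\sup_{|\epsilon|\le B/b^k}\Bigl|\widehat{\mathbf{1}}_{\mathcal{B},b^{k_2}}\Bigl(\tfrac{\ell}{d}+\epsilon\Bigr)\Bigr|,
\]
with $k_1$ minimal so that $b^{k_1}>B$ and $k_2=\min(k-k_1,\lfloor 2\log D/\log b\rfloor)$. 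The point is that the two nontrivial factors are tailored to the two summations separately. The $\eta$-sum hits only the $k_1$-piece, and after extending $\eta$ to a full period one applies Lemma~\ref{lmm:L1Bound} with the arbitrary shift $\theta=b^{k-k_1}\ell/d$; no separation hypothesis on the centres is needed at all. The $(d,\ell)$-sum hits only the $k_2$-piece, which is evaluated at the \emph{unmultiplied} argument $\ell/d+\epsilon$, so Lemma~\ref{lmm:TypeI} applies directly to the genuine Farey fractions and the collision issue never arises. Decoupling the two sums onto two different short factors is the missing idea in your plan.
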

\begin{proof}
The key idea here is to use our product formula to essentially factorize the double summation into the product of two single summations, since only certain terms in the product really depend on $\eta$.

The result follows immediately from Lemma \ref{lmm:L1Bound} if $B\ge b^k$, so we may assume $B<b^k$. For any integer $k_1\in [0,k]$ we have
\begin{align*}
\widehat{\mathbf{1}}_{\mathcal{B},b^k}(\alpha)&=\prod_{i=0}^{k-k_1-1}\Bigl(\sum_{n_i<b}\mathbf{1}_{\mathcal{B}}(n_i)e(n_ib^i\alpha)\Bigr)\prod_{i=k-k_1}^{k-1}\Bigl(\sum_{n_i<b}\mathbf{1}_{\mathcal{B}}(n_i)e(n_ib^i\alpha)\Bigr)\\
&=\widehat{\mathbf{1}}_{\mathcal{B},b^{k-k_1}}(\alpha)\widehat{\mathbf{1}}_{\mathcal{B},b^{k_1}}(b^{k-k_1}\alpha).
\end{align*}
Using this and the trivial bound $|\widehat{\mathbf{1}}_{\mathcal{B},b^j}(\theta)|\le (b-1)^j$,  for $k_1+k_2\le k$ we have that
\[\Bigl|\widehat{\mathbf{1}}_{\mathcal{B},b^k}\Bigl(\frac{\ell}{d}+\frac{\eta}{b^k}\Bigr)\Bigr|\le (b-1)^{k-k_1-k_2}\Bigl|\widehat{\mathbf{1}}_{\mathcal{B},b^{k_1}}\Bigl(\frac{b^{k-k_1}\ell}{d}+\frac{\eta}{b^{k_1}}\Bigr)\Bigr|\sup_{|\epsilon|\le B/b^k}\Bigl|\widehat{\mathbf{1}}_{\mathcal{B},b^{k_2}}\Bigl(\frac{\ell}{d}+\epsilon\Bigr)\Bigr|.\]
Substituting this bound gives
\begin{align*}
\sum_{d\sim D}&\sum_{\substack{\ell<d\\ (\ell,d)=1}}\sum_{\substack{|\eta|<B\\ b^k\ell/d+\eta\in\mathbb{Z}}}\Bigl|\widehat{\mathbf{1}}_{\mathcal{B},b^k}\Bigl(\frac{\ell}{d}+\frac{\eta}{b^k}\Bigr)\Bigr|\ll (b-1)^{k-k_1-k_2}\\
&\times\sum_{d\sim D}\sum_{\substack{\ell<d\\ (\ell,d)=1}}\sup_{|\epsilon|<B/b^k}\Bigl|\widehat{\mathbf{1}}_{\mathcal{B},b^{k_2}}\Bigl(\frac{\ell}{d}+\epsilon\Bigr)\Bigr|\sum_{\substack{|\eta|<B\\ b^k\ell/d+\eta\in\mathbb{Z}}}\Bigl|\widehat{\mathbf{1}}_{\mathcal{B},b^{k_1}}\Bigl(\frac{b^{k-k_1}\ell}{d}+\frac{\eta}{b^{k_1}}\Bigr)\Bigr|.
\end{align*}
We choose $k_1$ minimally such that $b^{k_1}>B$, and extend the inner sum to $|\eta|<b^{k_1}$. Applying Lemma \ref{lmm:L1Bound} to the inner sum, and then Lemma \ref{lmm:TypeI} to the sum over $d,\ell$ gives
\[\sum_{d\sim D}\sum_{\substack{\ell<d\\ (\ell,d)=1}}\sum_{\substack{|\eta|<B\\ b^k\ell/d+\eta\in\mathbb{Z}}}\Bigl|\widehat{\mathbf{1}}_{\mathcal{B},b^k}\Bigl(\frac{\ell}{d}+\frac{\eta}{b^k}\Bigr)\Bigr|\ll (b-1)^{k-k_1-k_2}b^{k_1}(b^{k_2}+D^2)(C\log{b})^{k_1+k_2}.\]
We choose $k_2=\min(k-k_1,\lfloor2\log{D}/\log{b}\rfloor)$. We see that
\begin{align*}
\Bigl(\frac{C b\log{b}}{b-1}\Bigr)^{k_1+k_2}&\ll (D^2B)^{\alpha_b},\\
D^2 b^{k_1}\Bigl(\frac{C\log{b}}{b-1}\Bigr)^{k_1+k_2}&\ll \frac{D^2 B}{(b-1)^k}(C\log{b})^k+(D^2B)^{\alpha_b}.
\end{align*}
Combining these bounds gives the result for $\widehat{\mathbf{1}}_{\mathcal{B},b^k}$.

We can apply an identical argument to $\widehat{g}$. The only difference is that our trivial bound is $\widehat{g}(\theta)\ll b^k$, so we obtain
\[\sum_{d\sim D}\sum_{\substack{\ell<d\\ (\ell,d)=1}}\sum_{\substack{|\eta|<B\\ b^k\ell/d+\eta\in\mathbb{Z}}}\Bigl|\widehat{\mathbf{1}}_{\mathcal{B},b^k}\Bigl(\frac{\ell}{d}+\frac{\eta}{b^k}\Bigr)\Bigr|\ll b^{k-k_1-k_2}b^{k_1}(b^{k_2}+D^2)(C\log{b})^{k_1+k_2}.\]
Choosing $k_1,k_2$ as above, this gives the analogous result for $\widehat{g}$ with $b-1$ replaced by $b$ and $\alpha_b$ replaced by $\beta_b$.
\end{proof}
\begin{lmm}[$L^\infty$ bounds]\label{lmm:LInfBound}\hfill\\
\begin{enumerate}
\item Let $d<b^{k/3}$ be of the form $d=d_1d_2$ with $(d_1,b)=1$ and $d_1\ne 1$, and let $|\epsilon|<1/2b^{2k/3}$. Then for any integer $\ell$ coprime with $d$ we have 
\[\Bigl| \widehat{\mathbf{1}}_{\mathcal{B},b^k}\Bigl(\frac{\ell}{d}+\epsilon\Bigr)\Bigr|\ll (b-1)^{k}\exp\Bigl(-c_b\frac{k}{\log{d}}\Bigr)\]
for some constant $c_b>0$ depending only on $b$.

\item For any $\theta\in\mathbb{R}$, we have
\[|\hat{g}(\theta)|\ll b^{k(1-\delta_{\alpha,b})},\]
where $\delta_{\alpha,b}=\|(b-1)\alpha\|^2/(4b^4)$.
\end{enumerate}
\end{lmm}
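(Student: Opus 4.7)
The strategy for both parts is to apply the product formulae from Section~\ref{sec:Fourier} and extract a small multiplicative saving at most factors.

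For part~(2), write $\widehat{g}_{b^k}(\theta) = \prod_{j=0}^{k-1} f(x_j)$ with $x_j = \alpha + b^j\theta$ and $f(x) = \sum_{n=0}^{b-1}e(nx)$. A direct computation gives $|f(x)|^2 = b^2 - 4\sum_{h=1}^{b-1}(b-h)\sin^2(\pi h x)$, and keeping only the $h=1$ term yields $|f(x)|^2 \le b^2 - 16(b-1)\|x\|^2$. The identity $x_{j+1}\equiv b x_j - (b-1)\alpha \pmod 1$ shows that in each pair $(x_{2j},x_{2j+1})$ at least one coordinate satisfies $\|x\|\ge \|(b-1)\alpha\|/(b+1)$. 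Bounding that factor by $(b^2 - c\|(b-1)\alpha\|^2)^{1/2}$ and the other trivially by $b$, each pair contributes $|f(x_{2j})f(x_{2j+1})|\le b^2(1 - c'\|(b-1)\alpha\|^2/b^2)$. Multiplying over the $\lfloor k/2\rfloor$ pairs and using $(1-\eta)^n\le e^{-n\eta}$ produces a bound of shape $b^k\exp(-ck\|(b-1)\alpha\|^2/b^2)$, which is comfortably stronger than the claimed $b^{k(1-\delta_{\alpha,b})}=b^k\exp(-k\log b\cdot \|(b-1)\alpha\|^2/(4b^4))$ since $1/b^2\ge \log b/(4b^4)$ for all $b\ge 2$.

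For part~(1), factor $\widehat{\mathbf{1}}_{\mathcal{B},b^k}(t) = \prod_{i=0}^{k-1} F(b^i t)$ with $F(s)=f(s)-e(a_0 s)$, noting $|F(s)|\le b-1$ with equality only at $s\in\mathbb{Z}$. A Taylor expansion at $s=0$, using that $F'(0)$ is purely imaginary and the strict Cauchy--Schwarz inequality $(\sum_{n\ne a_0}n)^2<(b-1)\sum_{n\ne a_0}n^2$ (valid for $b\ge 3$), gives $|F(s)|/(b-1)\le 1-c_b\|s\|^2$ for small $\|s\|$, and compactness extends this to $|F(s)|/(b-1)\le 1-\delta_b$ uniformly for $\|s\|\ge 1/(b+1)$, with $c_b,\delta_b>0$ depending only on $b$. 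Writing $t=\ell/d+\epsilon$ and factoring $d=d_1 d_2$ with $d_1>1$ coprime to $b$ and $d_2$ the $b$-smooth part of $d$, let $m_0\le \log d/\log b$ be minimal with $d_2\mid b^{m_0}$. For $i\in[m_0,k/2]$ the perturbation $b^i\epsilon$ has size $O(b^{-k/6})$, negligible compared with $1/d_1$, and $b^i\ell/d\equiv c_i/d_1\pmod 1$ with $c_i\in(\mathbb{Z}/d_1)^*$, so $\|b^i t\|$ is essentially $\|c_i/d_1\|$.

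The crux is the dynamical claim that at least $\gg_b k/\log d$ of these $i$ satisfy $\|c_i/d_1\|\ge 1/(b+1)$. I would prove this by tracking \emph{bad runs}. If $c\le d_1/(b+1)$ then $bc\pmod{d_1}=bc$ (no wrap), and this stays $\le d_1/(b+1)$ only if $c\le d_1/(b(b+1))$; a symmetric argument handles $c$ near $d_1$, and bad-small cannot transition to bad-large or vice versa (the image always lies in the good range, up to a measure-zero edge case). Iterating, every maximal bad run has length at most $L=\lceil \log d_1/\log b\rceil+O(1)$; moreover an entire orbit cannot be bad-small, since otherwise every orbit element would satisfy $c\le d_1/(b^m(b+1))$ for every $m$, forcing $c=0$. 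Hence in each period of length $r=\mathrm{ord}_{d_1}(b)$ the number of good iterates is $\ge 1$ always, and $\ge r/(2L+2)$ once $r\ge 2L$; summing over the $\sim (k/2-m_0)/r$ full cycles in the window yields at least $c_b k/\log d$ good indices in either regime. Each contributes a factor $(1-\delta_b)$, and the remaining factors are bounded trivially by $b-1$, giving the claimed $(b-1)^k\exp(-c_b k/\log d)$. The main obstacle is precisely this dynamical input controlling the orbit of $\ell$ under $\times b$ modulo $d_1$; the rest amounts to bookkeeping around the parameters $m_0$ and $k/2$ and the factorisation of $d$.
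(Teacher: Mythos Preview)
Your argument for part~(2) is correct and is essentially the paper's: both pair consecutive factors, use $x_{j+1}\equiv bx_j-(b-1)\alpha$ (the paper phrases this via $\|t\|\ge\|bt\|/b$ and the triangle inequality) to force one of each pair to satisfy $\|x\|\gg\|(b-1)\alpha\|/b$, and multiply the savings. The paper's uniform factor bound comes from $|e(n\theta)+e((n+1)\theta)|^2<4e^{-2\|\theta\|^2}$, giving $|f(x)|\le b\exp(-\|x\|^2/b)$, which plays the same role as your $|f(x)|^2\le b^2-16(b-1)\|x\|^2$; your exponent should really come out as $\asymp 1/b^{3}$ rather than $1/b^{2}$, but this is still comfortably below the target $\log b/(4b^4)$.

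For part~(1) the overall strategy is right, but your reduction to the orbit modulo $d_1$ has a genuine gap. The claim $m_0\le\log d/\log b$ is false for composite $b$. Take $b=10$ and $d_2=2^{a}$: then $d_2\mid 10^{m_0}=2^{m_0}5^{m_0}$ forces $m_0=a$, while $2^{a}<10^{k/3}$ allows $a$ up to $(k/3)\log 10/\log 2\approx 1.1k$. So your window $[m_0,k/2]$ can be empty and the good-index count collapses. The paper sidesteps this by never separating $d_1$ and $d_2$. It works with $\|b^{i}t\|$ directly on $i\in[0,k/3)$, using only two facts: first
\[
\|b^{i}t\|\ge \frac{1}{d}-|b^{i}\epsilon|\ge \frac{1}{2d}\qquad (i<k/3),
\]
which needs just $d_1\nmid b^{i}\ell$, and second the elementary growth relation $\|b^{i+1}t\|=b\|b^{i}t\|$ whenever $\|b^{i}t\|<1/(2b)$. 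These two facts alone force every subinterval of $[0,k/3)$ of length $\lceil\log d/\log b\rceil$ to contain an index with $\|b^{i}t\|>1/(2b^{2})$, yielding $\gg k/\log d$ good indices with no $m_0$ in sight. Your bad-run analysis modulo $d_1$ is morally this same growth observation; replacing the reduction step by the direct argument on $[0,k/3)$ repairs the proof.
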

This shows that $\widehat{g}(\theta)$ is uniformly small for $\alpha=j/m$ with $1\le j\le m-1$ and $(m,b-1)=1$, and that $\widehat{\mathbf{1}}_{\mathcal{B},b^k}(\theta)$ is small whenever $\theta$ is close to a rational with small denominator which is not a divisor of $b^k$. 
\begin{proof}
We have that
\[|e(n\theta)+e((n+1)\theta)|^2=2+2\cos(2\pi \theta)<4\exp(-2\|\theta\|^2).\]
This implies that
\[\Bigl|\sum_{n_i<b}\mathbf{1}_{\mathcal{B}}(n_i)e(n_i \theta)\Bigr|\le b-3+2\exp(-\|\theta\|^2)\le (b-1)\exp\Bigl(-\frac{\|\theta\|^2}{b}\Bigr).\]
We substitute this bound into our expression for $\widehat{\mathbf{1}}_{\mathcal{B}}$, which gives
\begin{align*}
\Bigl|\widehat{\mathbf{1}}_{\mathcal{B},b^k}(t)\Bigr|&=\prod_{i=0}^{k-1}\Bigl|\sum_{n_i<b}\mathbf{1}_{\mathcal{B}}(n_i)e(n_ib^it)\Bigr|\\
&\le (b-1)^{k}\exp\Bigl(-\frac{1}{b}\sum_{i=0}^{k-1}\|b^it\|^2\Bigr).
\end{align*}
If $\|b^it\|<1/2b$ then $\|b^{i+1}t\|=b\|b^i t\|$. If $t=\ell/d_1d_2$ with $d_1>1$, $(d_1,b)=1$ and $(\ell,d_1)=1$ then $\|b^it\|\ge 1/d_1d_2$ for all $i$. Similarly, if $t=\ell/d_1d_2+\epsilon$ with $\ell,d_1,d_2$ as above $|\epsilon|<b^{-2k/3}/2$ and $d=d_1d_2<b^{k/3}$ then for $i<k/3$ we have $\|b^it\|\ge 1/d-b^i|\epsilon|\ge 1/2d$. Thus, for any interval $\mathcal{I}\subseteq[0,k/3]$ of length $\log{d}/\log{b}$, there must be some integer $i\in\mathcal{I}$ such that $\|b^i(\ell/d+\epsilon)\|>1/2b^2$. This implies that
\[\sum_{i=0}^k\Bigl\|b^i\Bigl(\frac{\ell}{d}+\epsilon\Bigr)\Bigr\|^2\ge \frac{1}{4b^4}\Bigl\lfloor\frac{k\log{b}}{3\log{d}}\Bigr\rfloor.\]
Substituting this into the bound for $\widehat{\mathbf{1}}_{\mathcal{B},b^k}$, and recalling we assume $d<b^{k/3}$ gives the first result.

For $\widehat{g}$, we notice that since $\|t\|\ge \|bt\|/b$ and $\|t_1\|+\|t_2\|\ge \|t_1-t_2\|$, we have
\begin{align*}
\|b^i\theta+\alpha\|+\|b^{i+1}\theta+\alpha\|&\ge \frac{1}{b}\Bigl(\|b^{i+1}\theta+b\alpha\|+\|b^{i+1}\theta+\alpha\|\Bigr)\\
&\ge \frac{\|(b-1)\alpha\|}{b}.
\end{align*}
Thus
\[
\sum_{i=0}^{k-1}\|b^i\theta+\alpha\|^2\ge \frac{1}{4}\sum_{i=0}^{k-2}\Bigl(\|b^i\theta+\alpha\|+\|b^{i+1}\theta+\alpha\|\Bigr)^2\ge \frac{k}{4 b^2}\|(b-1)\alpha\|^2.
\]
Using this in an analogous argument to the one for $\widehat{\mathbf{1}}_{\mathcal{B},b^k}$, we find
\begin{align*}
|\widehat{g}_{b^k}(\theta)|&\le \prod_{i=0}^{k-1}\Bigl|\sum_{n_i<b}e(n_i(b^i+\alpha)\Bigr|\\
&\le b^k\exp\Bigl(\frac{-1}{b}\sum_{i=0}^{k-1}\|b^i\theta+\alpha\|^2\Bigr)\\
&\le b^k \exp\Bigl(\frac{k\|(b-1)\alpha\|^2}{4b^3}\Bigr).
\end{align*}
This gives the second result on noting that $\log{b}\le b$.
\end{proof}
\section{Minor arcs}
We now use the exponential sum estimates from the previous sections to show that when $\alpha$ is `far' from a rational with small denominator the quantities $\widehat{\mathbf{1}}_{\mathcal{B},b^k}(\alpha)\hat{\Lambda}_{b^k}(-\alpha)$ and $\widehat{g}(\alpha)\hat{\Lambda}_{b^k}(-\alpha)$ are typically small in absolute value.
\begin{lmm}\label{lmm:PrimeMinor}
Let $1\ll B\ll b^k/D_0D$ and $1\ll D\ll D_0\ll b^{k/2}$. Then we have
\begin{align*}
\sum_{d\sim D}\sum_{\substack{0<\ell<d\\ (\ell,d)=1}}\sum_{\substack{|\eta|\sim B\\ b^k\ell/d+\eta\in\mathbb{Z}}}\Bigl|\widehat{\mathbf{1}}_{\mathcal{B},b^k}\Bigl(\frac{\ell}{d}+\frac{\eta}{b^k}\Bigr)\widehat{\Lambda}_{b^k}\Bigl(-\frac{\ell}{d}-\frac{\eta}{b^k}\Bigr)\Bigr|
&\ll (b-1)^kb^{k}\Bigl(\frac{k^4}{(DB)^{1/5-\alpha_b}}+\frac{k^4b^{k\alpha_b}}{D_0^{1/2}}\Bigr),\\
\sum_{d\sim D}\sum_{\substack{0<\ell<d\\ (\ell,d)=1}}\sum_{\substack{|\eta|\ll 1\\ b^k\ell/d+\eta\in\mathbb{Z}}}\Bigl|\widehat{\mathbf{1}}_{\mathcal{B},b^k}\Bigl(\frac{\ell}{d}+\frac{\eta}{b^k}\Bigr)\widehat{\Lambda}_{b^k}\Bigl(-\frac{\ell}{d}-\frac{\eta}{b^k}\Bigr)\Bigr|
&\ll  (b-1)^kb^{k}\Bigl(\frac{k^4}{D^{1/5-\alpha_b}}+\frac{k^4D_0^{1/2+2\alpha_b}}{b^{k/2}}\Bigr),\\
\sum_{d\sim D}\sum_{\substack{0<\ell<d\\ (\ell,d)=1}}\sum_{\substack{|\eta|\sim B\\ b^k\ell/d+\eta\in\mathbb{Z}}}\Bigl|\widehat{g}_{b^k}\Bigl(\frac{\ell}{d}+\frac{\eta}{b^k}\Bigr)\widehat{\Lambda}_{b^k}\Bigl(-\frac{\ell}{d}-\frac{\eta}{b^k}\Bigr)\Bigr|
&\ll  b^{2k}\Bigl(\frac{k^4}{(DB)^{1/5-\beta_b}}+\frac{k^4b^{k\beta_b}}{D_0^{1/2}}\Bigr),\\
\sum_{d\sim D}\sum_{\substack{0<\ell<d\\ (\ell,d)=1}}\sum_{\substack{|\eta|\ll 1\\ b^k\ell/d+\eta\in\mathbb{Z}}}\Bigl|\widehat{g}_{b^k}\Bigl(\frac{\ell}{d}+\frac{\eta}{b^k}\Bigr)\widehat{\Lambda}_{b^k}\Bigl(-\frac{\ell}{d}-\frac{\eta}{b^k}\Bigr)\Bigr|
&\ll  b^{2k}\Bigl(\frac{k^4}{D^{1/5-\beta_b}}+\frac{k^4D_0^{1/2+2\beta_b}}{b^{k/2}}\Bigr).
\end{align*}
Here $\alpha_b$ and $\beta_b$ are the constants described in Lemma \ref{lmm:ExtendedTypeI}.
\end{lmm}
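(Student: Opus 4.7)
The plan is to obtain a pointwise upper bound on $|\widehat{\Lambda}_{b^k}(-\alpha)|$ from the Vinogradov--Vaughan estimate (Lemma \ref{lmm:PrimeSum}) and then multiply it by the hybrid bound of Lemma \ref{lmm:ExtendedTypeI}, which already packages the sum over $(d,\ell,\eta)$ of $|\widehat{\mathbf{1}}_{\mathcal{B},b^k}|$ or $|\widehat{g}_{b^k}|$. For each $\alpha = \ell/d + \eta/b^k$ in the outer sum I would use the natural Diophantine representation $a=\ell$, $q=d$, $\beta=\eta/b^k$; this is legal because the hypotheses $D\le D_0\le b^{k/2}$ and $B\ll b^k/(D_0D)$ together give $D^2B\ll b^k$, hence $|\beta|<1/d^2$. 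With this choice $|d\beta|\sim DB/b^k$ in the $|\eta|\sim B$ range and $|d\beta|\ll D/b^k$ in the $|\eta|\ll 1$ range.

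In the $|\eta|\sim B$ range, Lemma \ref{lmm:PrimeSum} gives
\[
|\widehat{\Lambda}_{b^k}(-\alpha)|\ll k^4\Bigl(b^{4k/5}+\frac{b^k}{(DB)^{1/2}}+b^{k/2}(DB)^{1/2}\Bigr),
\]
uniformly in $\alpha$. Combining with the $(b-1)^k(D^2B)^{\alpha_b}$ part of Lemma \ref{lmm:ExtendedTypeI}, the first two Vinogradov terms are dominated by the first target term $(b-1)^kb^k k^4(DB)^{\alpha_b-1/5}$ by an elementary check on exponents (using $DB\le b^k/D_0$ and $\alpha_b$ small). The third Vinogradov term is the delicate one: expanding
\[
b^{k/2}(DB)^{1/2}(D^2B)^{\alpha_b}=b^{k/2}(DB)^{1/2+\alpha_b}D^{\alpha_b}
\]
and substituting the two constraints $DB\le b^k/D_0$ and $D\le D_0$ yields $b^{k(1+\alpha_b)}/D_0^{1/2}$, which is exactly the second target term $b^k\cdot b^{k\alpha_b}/D_0^{1/2}$. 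The alternative $D^2B(C\log b)^k$ piece of Lemma \ref{lmm:ExtendedTypeI} is negligible once $b$ is large, since $D^2B\le b^k$ and $b^{\alpha_b}\gtrsim C\log b$.

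The $|\eta|\ll 1$ range runs the same way with $B$ replaced by a constant throughout: Vinogradov's first two terms give the first target $(b-1)^kb^k k^4/D^{1/5-\alpha_b}$, while the third gives $b^{k/2}D^{1/2+2\alpha_b}\le b^{k/2}D_0^{1/2+2\alpha_b}$, matching $(b-1)^kb^k k^4 D_0^{1/2+2\alpha_b}/b^{k/2}$. The two $\widehat{g}$ inequalities are handled by the identical scheme, using the $\widehat{g}$ version of Lemma \ref{lmm:ExtendedTypeI}; this replaces the trivial bound $(b-1)^k$ by $b^k$ and the exponent $\alpha_b$ by $\beta_b$, accounting both for the $b^{2k}$ prefactor in the claimed inequalities and the appearance of $\beta_b$.

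The main obstacle is that no single Diophantine representation of $\alpha$ simultaneously produces both terms of the claimed bound. The $D_0^{1/2}$ factor does not arise from applying Lemma \ref{lmm:PrimeSum} with a new approximation of denominator $\sim D_0$; rather it emerges by inserting the \emph{a priori} upper bound $DB\le b^k/D_0$ into the $b^{k/2}(DB)^{1/2}$ Vinogradov term, after picking up $\alpha_b$ extra factors from the digit side. Getting this last exponent matching correct --- so that the powers of $D$, $B$, $D_0$, and $b^k$ combine into exactly $b^{k\alpha_b}/D_0^{1/2}$ (respectively $D_0^{1/2+2\alpha_b}/b^{k/2}$ in the $|\eta|\ll 1$ case) --- is the one step that requires genuine care.
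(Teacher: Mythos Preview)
Your proposal is correct and follows essentially the same route as the paper: bound $|\widehat{\Lambda}_{b^k}|$ pointwise via Lemma~\ref{lmm:PrimeSum}, bound the $(d,\ell,\eta)$-sum of the digit transform via Lemma~\ref{lmm:ExtendedTypeI}, and then simplify the resulting three-term product using $D^2B\le b^k$, $DB\le b^k/D_0$, and $D\le D_0$. The only cosmetic difference is in the $|\eta|\ll 1$ case: the paper does not literally substitute $B=O(1)$ into Lemma~\ref{lmm:PrimeSum} (since the $x^{1/2}/|d\beta|^{1/2}$ term degenerates when $\eta$ is zero or very small) but instead invokes the classical Vinogradov bound $b^{4k/5}+b^k D^{-1/2}+(b^k D)^{1/2}$ at $\ell/d$ and transfers it to $\ell/d+\eta/b^k$ by partial summation --- this yields exactly the expression you wrote down, so the remainder of your argument goes through unchanged.
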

The key point in this result is that provided $\alpha_b<1/5-\epsilon$ and $D_0^{1/2+2\alpha_b}<b^{k/2-k\epsilon}$ and $D_0^{1/2}>b^{k(\alpha_b-\epsilon)}$, we show that even if we sum over all fractions which are close to rationals with denomitators of size $D$, we have a power saving over the trivial bound $(b-1)^kb^k$ of $\widehat{\mathbf{1}}_{\mathcal{B},b^k}(\theta)\widehat{\Lambda}(-\theta)$, and these terms can be ignored in our application of the circle method. Since $\alpha_b\rightarrow 0$ as $b\rightarrow \infty$ these inequlaties can all be satisfied on choosing $D_0=b^{k/2}$ and taking $b$ sufficiently large.
\begin{proof}
By Lemma \ref{lmm:ExtendedTypeI} we have that if $D^2B\ll b^k$ then
\[\sum_{d\sim D}\sum_{\substack{\ell<d\\ (\ell,d)=1}}\sum_{\substack{|\eta|<B\\ b^k\ell/d+\eta\in\mathbb{Z}}}\Bigl|\widehat{\mathbf{1}}_{\mathcal{B},b^k}\Bigl(\frac{\ell}{d}+\frac{\eta}{b^k}\Bigr)\Bigr|\ll (b-1)^k(D^2B)^{\alpha_b}.\]
By Lemma \ref{lmm:PrimeSum} we have
\[\sup_{\substack{d\sim D\\ (\ell,d)=1\\ |\eta|\sim B}}\Bigl|\sum_{n<b^k}\Lambda(n)e\Bigl(-n\Bigl(\frac{\ell}{d}+\frac{\eta}{b^k}\Bigr)\Bigr)\Bigr|\ll \Bigl(b^{4k/5}+\frac{b^{k}}{(DB)^{1/2}}+(DB)^{1/2}b^{k/2}\Bigr)(k\log{b})^4.\]
Putting these together gives
\begin{align*}
&\sum_{d\sim D}\sum_{\substack{0<\ell<d\\ (\ell,d)=1}}\sum_{\substack{|\eta|\sim B\\ b^k\ell/d+\eta\in\mathbb{Z}}}\Bigl|\widehat{\mathbf{1}}_{\mathcal{B},b^k}\Bigl(\frac{\ell}{d}+\frac{\eta}{b^k}\Bigr)\widehat{\Lambda}_{b^k}\Bigl(-\frac{\ell}{d}-\frac{\eta}{b^k}\Bigr)\Bigr|\\
&\ll  k^4 b^k(b-1)^k\Bigl(\frac{(D^2B)^{\alpha_b}}{b^{k/5}}+\frac{(D^2B)^{\alpha_b}}{(DB)^{1/2}}+\frac{(DB)^{1/2}(D^2B)^{\alpha_b}}{b^{k/2}}\Bigr)(\log{b})^4.
\end{align*}
Recalling that $D^2B<b^k$ and $DB<b^k/D_0$ by assumption, we see that this is
\[\ll k^4 b^k(b-1)^k \Bigl((D^2B)^{\alpha_b-1/5}+(D^2B)^{\alpha_b-1/4}+\frac{b^{k\alpha_b}}{D_0^{1/2}}\Bigr),\]
and the first term clearly dominates the second. 

By partial summation we see that we obtain the same bound for $\widehat{\Lambda}_{b^k}(\alpha+O(1/b^k))$ as the bound for $\widehat{\Lambda}_{b^k}(\alpha)$ given in Lemma \ref{lmm:PrimeSum}. Thus in the case $|\eta|\ll 1$ we obtain the well-known bound 
\[\sup_{\substack{d\sim D\\ (\ell,d)=1\\ |\eta|\ll 1}}\Bigl|\sum_{n<b^k}\Lambda(n)e\Bigl(-n\Bigl(\frac{\ell}{d}+\frac{\eta}{b^k}\Bigr)\Bigr)\Bigr|\ll \Bigl(b^{4k/5}+\frac{b^{k}}{D^{1/2}}+D^{1/2}b^{k/2}\Bigr)(k\log{b})^4.\]
This gives
\begin{align*}
&\sum_{d\sim D}\sum_{\substack{0<\ell<d\\ (\ell,d)=1}}\sum_{\substack{|\eta|\ll 1\\ b^k\ell/d+\eta\in\mathbb{Z}}}\Bigl|\widehat{\mathbf{1}}_{\mathcal{B},b^k}\Bigl(\frac{\ell}{d}+\frac{\eta}{b^k}\Bigr)\widehat{\Lambda}_{b^k}\Bigl(-\frac{\ell}{d}-\frac{\eta}{b^k}\Bigr)\Bigr|\\
&\ll  k^4 b^k(b-1)^k\Bigl(\frac{D^{2\alpha_b}}{b^{k/5}}+\frac{D^{2\alpha_b}}{D^{1/2}}+\frac{D^{1/2+2\alpha_b}}{b^{k/2}}\Bigr).
\end{align*}
Recalling that $1\ll D\ll D_0\ll b^{k/2}$ then gives the result for $\widehat{\mathbf{1}}_{\mathcal{B},b^k}$. The bounds for $\widehat{g}_{b^k}$ follow an identical argument with $\alpha_b$ replaced by $\beta_b$ and occurrences of $b-1$ replaced by $b$ by using the second bound from Lemma \ref{lmm:ExtendedTypeI}.
\end{proof}

\section{Major Arcs}
We now consider $\widehat{\mathbf{1}}_{\mathcal{B},b^k}(\alpha)\widehat{\Lambda}_{b^k}(-\alpha)$ and $\widehat{g}_{b^k}(\alpha)\widehat{\Lambda}_{b^k}(-\alpha)$ when $\alpha$ is close to a rational with small denominator. By Lemma \ref{lmm:LInfBound}, $\widehat{\mathbf{1}}_{\mathcal{B},b^k}$ is small unless the denominator is a divisor of $b^k$, and $\widehat{g}_{b^k}$ is always small for such $\alpha$. Since there are very few such $\alpha$, this means they make a negligible contribution.
\begin{lmm}\label{lmm:MajorError}
If $D$, $B\ll \exp(c_b^{1/2} k^{1/2}/3)$, then we have
\[\sum_{\substack{d<D\\ \exists p|d,p\nmid b}}\sum_{\substack{0<\ell<d\\ (\ell,d)=1}}\sum_{\substack{|\eta|\ll B\\ b^k\ell/d+\eta\in\mathbb{Z}}}\Bigl|\widehat{\mathbf{1}}_{\mathcal{B},b^k}\Bigl(\frac{\ell}{d}+\frac{\eta}{b^k}\Bigr)\widehat{\Lambda}_{b^k}\Bigl(\frac{-\ell}{d}+\frac{-\eta}{b^k}\Bigr)\Bigr|\ll \frac{b^k(b-1)^k}{\exp(c_b^{1/2}k^{1/2})}.\]
For any $D,B\ge 1$ we have
\[\sum_{\substack{d<D\\ \exists p|d,p\nmid b}}\sum_{\substack{0<\ell<d\\ (\ell,d)=1}}\sum_{\substack{|\eta|\ll B\\ b^k\ell/d+\eta\in\mathbb{Z}}}\Bigl|\widehat{g}_{b^k}\Bigl(\frac{\ell}{d}+\frac{\eta}{b^k}\Bigr)\widehat{\Lambda}_{b^k}\Bigl(\frac{-\ell}{d}+\frac{-\eta}{b^k}\Bigr)\Bigr|\ll D^2 B b^{k(2-\delta_{\alpha,b})}.\]
Here $c_b$ and $\delta_{\alpha,b}$ are the quantities from Lemma \ref{lmm:LInfBound}.
\end{lmm}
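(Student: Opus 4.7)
The plan is to combine the pointwise $L^\infty$ estimates of Lemma \ref{lmm:LInfBound} with the trivial bound $|\widehat{\Lambda}_{b^k}(\theta)|\le \psi(b^k)\ll b^k$ and a crude count $\ll D^2B$ of the number of triples $(d,\ell,\eta)$ appearing in the sum. Since the hard work has already been done in Section \ref{sec:Fourier}, there is no real obstacle; the only task is to verify that the exponential gain in the pointwise bound dominates the term count in the stated ranges.

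For the first bound, I first check that Lemma \ref{lmm:LInfBound}(1) applies to every term in the sum. The hypothesis that $d$ has a prime factor not dividing $b$ furnishes a factorisation $d=d_1d_2$ with $d_1>1$ and $(d_1,b)=1$. Since $D\ll\exp(c_b^{1/2}k^{1/2}/3)$, for large $k$ we have $d<D<b^{k/3}$, and the perturbation $\epsilon=\eta/b^k$ satisfies $|\epsilon|\le B/b^k<1/(2b^{2k/3})$. Lemma \ref{lmm:LInfBound}(1) then gives
\[
\Bigl|\widehat{\mathbf{1}}_{\mathcal{B},b^k}\Bigl(\frac{\ell}{d}+\frac{\eta}{b^k}\Bigr)\Bigr|\ll(b-1)^k\exp\Bigl(-\frac{c_b k}{\log d}\Bigr)\ll(b-1)^k\exp(-3c_b^{1/2}k^{1/2}),
\]
using $\log d\le\log D\ll c_b^{1/2}k^{1/2}/3$. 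Multiplying by the trivial bound $b^k$ on $\widehat{\Lambda}_{b^k}$ and summing over the $\ll D^2B\ll\exp(c_b^{1/2}k^{1/2})$ triples gives a total of $\ll b^k(b-1)^k\exp(-2c_b^{1/2}k^{1/2})$, comfortably stronger than the claimed bound.

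For the second bound, Lemma \ref{lmm:LInfBound}(2) supplies the \emph{uniform} estimate $|\widehat{g}_{b^k}(\theta)|\ll b^{k(1-\delta_{\alpha,b})}$, which places no constraint on $\theta$ at all (this is the reason no restriction on $D$ or $B$ is needed). Combining this pointwise with the trivial bound $|\widehat{\Lambda}_{b^k}(-\theta)|\ll b^k$ and multiplying by the count $\ll D^2B$ of triples in the sum yields the stated bound $\ll D^2 B\, b^{k(2-\delta_{\alpha,b})}$.

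The step that required thought was the choice of the threshold $\exp(c_b^{1/2}k^{1/2}/3)$ in the first bound: it is calibrated so that the exponential gain $\exp(-c_bk/\log D)\approx\exp(-3c_b^{1/2}k^{1/2})$ in the $L^\infty$ estimate beats the $D^2B\approx\exp(c_b^{1/2}k^{1/2})$ term count with room to absorb polylog losses from the prime count. Beyond this balancing of parameters, the argument is essentially bookkeeping; the genuine content lies in Lemma \ref{lmm:LInfBound}.
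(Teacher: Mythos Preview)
Your proof is correct and follows exactly the approach the paper takes: the paper's own proof is the single sentence ``These bounds follow immediately from Lemma~\ref{lmm:LInfBound}, using the trivial bound for the exponential sum involving primes,'' and you have simply unpacked that sentence, verifying the hypotheses of Lemma~\ref{lmm:LInfBound}(1), doing the parameter balancing explicitly, and noting that the uniform bound in Lemma~\ref{lmm:LInfBound}(2) makes the second estimate immediate.
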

\begin{proof}
These bounds follow immediately from Lemma \ref{lmm:LInfBound}, using the trivial bound for the exponential sum involving primes.
\end{proof}
\begin{lmm}\label{lmm:PrimeMajor}
Let $A>0$. Then for $D,E<(\log{b^k})^{A}$ and $D>b$ we have
\begin{align*}
\frac{1}{b^k}\sum_{\substack{d<D\\ p|d\Rightarrow p|b}}\sum_{\substack{0\le \ell<d\\ (\ell,d)=1}}\sum_{|e|<E}\widehat{\mathbf{1}}_{\mathcal{B},b^k}\Bigl(\frac{\ell}{d}+\frac{e}{b^k}\Bigr)&\widehat{\Lambda}_{b^k}\Bigl(\frac{-\ell}{d}+\frac{-b}{b^k}\Bigr)\\
&=\kappa_b(a_0)(b-1)^k+O_A\Bigl(\frac{(b-1)^k}{(\log{b^k})^A}\Bigr),
\end{align*}
where
\[\kappa_b(a_0)=\begin{cases}
\displaystyle\frac{b}{b-1},\qquad&\text{if $(a_0,b)\ne 1$,}\\
\displaystyle\frac{b(\phi(b)-1)}{(b-1)\phi(b)},&\text{if $(a_0,b)=1$.}
\end{cases}
\]
\end{lmm}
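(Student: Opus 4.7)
The plan is to apply Siegel--Walfisz to the prime sum so that the sum over $e$ collapses to its $e=0$ contribution, and then to evaluate the remaining digital piece via Ramanujan sums and an Euler product over $p\mid b$. First I would decompose
\[
\widehat{\Lambda}_{b^k}\Bigl(-\frac{\ell}{d}-\frac{e}{b^k}\Bigr)=\sum_{r\bmod d}e\Bigl(-\frac{r\ell}{d}\Bigr)\sum_{\substack{n<b^k\\ n\equiv r\,(d)}}\Lambda(n)e\Bigl(-\frac{ne}{b^k}\Bigr),
\]
and apply partial summation to each inner sum together with Siegel--Walfisz (legitimate since $d<D<(\log b^k)^A$). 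Because $e$ is an integer, $\sum_{n<b^k}e(-ne/b^k)=b^k\mathbf{1}[e=0]$, so the main term of this inner sum is $b^k\mathbf{1}[e=0]\mathbf{1}[(r,d)=1]/\phi(d)$. Summing on $r$ via $\sum_{(r,d)=1}e(-r\ell/d)=\mu(d)$ (for $(\ell,d)=1$) yields
\[
\widehat{\Lambda}_{b^k}\Bigl(-\frac{\ell}{d}-\frac{e}{b^k}\Bigr)=\frac{\mu(d)\,b^k\,\mathbf{1}[e=0]}{\phi(d)}+O_{A'}\Bigl(\frac{d(1+|e|)\,b^k}{(\log b^k)^{A'}}\Bigr)
\]
for any $A'$. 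Combined with the trivial bound $|\widehat{\mathbf{1}}_{\mathcal{B},b^k}|\le(b-1)^k$, the aggregate contribution of the error to the left-hand side is $\ll D^{3}E^{2}(b-1)^k(\log b^k)^{-A'}$, which is admissible after choosing $A'$ sufficiently large in terms of $A$.

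It remains to evaluate the $e=0$ main term. Expanding $\widehat{\mathbf{1}}_{\mathcal{B},b^k}(\ell/d)=\sum_{n<b^k,\,n\in\mathcal{B}}e(n\ell/d)$ and swapping sums, the sum over $(\ell,d)=1$ produces the Ramanujan sum $c_d(n)$, reducing the target to
\[
\sum_{\substack{n<b^k\\ n\in\mathcal{B}}}\sum_{\substack{d<D\\ p\mid d\Rightarrow p\mid b}}\frac{\mu(d)\,c_d(n)}{\phi(d)}.
\]
Only squarefree $b$-smooth $d$ survive the Möbius factor, and every such $d$ divides $\mathrm{rad}(b)\le b<D$, so the inner sum ranges over \emph{all} divisors of $\mathrm{rad}(b)$. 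Multiplicativity of $\mu(d)c_d(n)/\phi(d)$ converts this into the Euler product $\prod_{p\mid b}\bigl(1-c_p(n)/(p-1)\bigr)$; since $c_p(n)=p-1$ when $p\mid n$ and $c_p(n)=-1$ otherwise, the product vanishes if $(n,b)>1$ and equals $\prod_{p\mid b}p/(p-1)=b/\phi(b)$ if $(n,b)=1$.

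Finally, as $(n,b)=1$ is determined by the base-$b$ units digit $n_0$, I would count digitally: removing $a_0$ leaves $b-1$ permitted digits for $n_0$, among which $\phi(b)$ are coprime to $b$ when $(a_0,b)\ne1$ and $\phi(b)-1$ when $(a_0,b)=1$, while the remaining $k-1$ digits range freely. Thus $\#\{n<b^k:n\in\mathcal{B},\,(n,b)=1\}$ equals $\phi(b)(b-1)^{k-1}$ or $(\phi(b)-1)(b-1)^{k-1}$ respectively, and multiplication by $b/\phi(b)$ recovers $\kappa_b(a_0)(b-1)^k$ in both cases. I expect the only delicate point to be the bookkeeping of the Siegel--Walfisz error through the perturbation $|e|<E$ and the outer sums over $d$ and $\ell$; once the reduction to $e=0$ is in place, the Euler product evaluation and the digital count are routine.
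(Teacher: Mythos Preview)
Your proof is correct and follows essentially the same strategy as the paper: use Siegel--Walfisz (with partial summation to handle the twist $e(-ne/b^k)$) to reduce $\widehat{\Lambda}_{b^k}(-\ell/d-e/b^k)$ to $\mu(d)b^k\mathbf{1}[e=0]/\phi(d)$ up to admissible error, then evaluate the resulting main term. The only cosmetic difference is in the final evaluation: the paper reparametrises $\ell/d=\ell'/b$ and applies Fourier inversion on $\mathbb{Z}/b\mathbb{Z}$ to obtain $\sum_{(a,b)=1}\#\{m\in\mathcal{B}:m\equiv a\ (b)\}$ directly, whereas you expand into Ramanujan sums and an Euler product over $p\mid b$; both routes are routine and yield the same digital count.
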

\begin{proof}
If $e\ne 0$ then by the prime number theorem in arithmetic progressions in short intervals and partial summation we have
\[\widehat{\Lambda}_{b^k}\Bigl(\frac{-\ell}{d}+\frac{-e}{b^k}\Bigr)\ll_A \frac{b^k}{(\log{b^k})^{4A}}.\]
Thus the terms with $e\ne 0$ contribute
\begin{align*}
\ll \frac{(\log{b^k})^{3A}}{b^k}\sup_{0<a<b^k}\Bigl|\widehat{\mathbf{1}}_{\mathcal{B},b^k}\Bigl(\frac{a}{b^k}\Bigr)\Bigr|\frac{b^k}{(\log{b^k})^{4A}}\ll \frac{(b-1)^k}{(\log{b^k})^{A}}.
\end{align*}
Here we used the trivial bound that $|\widehat{\mathbf{1}}_{\mathcal{B},b^k}(\theta)|\le (b-1)^k$ for all $\theta$.

Using the prime number theorem in arithmetic progressions again, we see that
\[\widehat{\Lambda}_{b^k}\Bigl(\frac{-\ell}{d}\Bigr)=\frac{b^k}{\phi(d)}\sum_{\substack{0<c<d\\ (c,d)=1}}e\Bigl(\frac{-l c}{d}\Bigr)+O_A\Bigl(\frac{b^k}{(\log{b^k})^{4A}}\Bigr)=\frac{\mu(d)b^k}{\phi(d)}+O_A\Bigl(\frac{b^k}{(\log{b^k})^{4A}}\Bigr).\]
Thus we may restrict to $d|b$, since all other such $d$ are not square-free. Letting $\ell'/b=\ell/d$, we see terms with $e=0$ and $d|b$ contribute
\begin{align*}
\frac{1}{b^k}\sum_{0\le \ell'<b}\widehat{\mathbf{1}}_{\mathcal{B},b^k}\Bigl(\frac{\ell'}{b}\Bigr)&\widehat{\Lambda}_{b^k}\Bigl(\frac{-\ell'}{b}\Bigr)=\frac{1}{b^{k-1}}\sum_{\substack{n,m<b^k\\ n\equiv m\Mod{b}}}\Lambda(n)\mathbf{1}_{\mathcal{B}}(m)\\
&=\frac{b}{\phi(b)}\sum_{\substack{1<a<b\\ (a,b)=1}}\sum_{\substack{m<b^k\\ m\equiv a\Mod{b}}}\mathbf{1}_{\mathcal{B}}(m)+O_A\Bigl(\frac{b^k}{(\log{b^k})^{4A}}\Bigr).
\end{align*}
If $a\ne a_0$ then the sum over $m$ is $(b-1)^{k-1}$ since there are $(b-1)$ choices for each digit of $m$ apart from the final one, which must be $a$. If $a=a_0$ then the sum is empty. Thus
\begin{align*}
&\frac{b}{\phi(b)}\sum_{\substack{1<a<b\\ (a,b)=1}}\sum_{\substack{m<b^k\\ m\equiv a\Mod{b}}}\mathbf{1}_{\mathcal{B}}(m)=\begin{cases}
b(b-1)^{k-1},\qquad &\text{if $(a_0,b)\ne 1$,}\\
\displaystyle\frac{\phi(b)-1}{\phi(b)}b(b-1)^{k-1},&\text{if $(a_0,b)=1$.}
\end{cases}
\end{align*}
Putting this together gives the result.
\end{proof}
\section{Proofs of Theorems \ref{thrm:WeakMauduit} and \ref{thrm:WeakMaynard}}

\begin{proof}[Proof of Theorem \ref{thrm:WeakMauduit}]
For Theorem \ref{thrm:WeakMauduit}, by using additive characters to detect $s_q(n)\equiv a \Mod{m}$ we have that
\begin{align*}
\sum_{\substack{n<b^k\\ s_b(n)\equiv a\Mod{m}}}\Lambda(n)&=\frac{1}{m}\sum_{n<b^k}\Lambda(n)+\frac{1}{m}\sum_{\substack{\alpha=j/m\\ 1\le j\le m-1}}e(-a\alpha)\sum_{n<b^k}\Lambda(n)g(n)\\
&=\frac{(1+o(1))b^k}{m}+O\Bigl(\sup_{\alpha=j/m}\frac{1}{b^k}\Bigl|\sum_{a<b^k}\widehat{g}_{b^k}\Bigl(\frac{a}{b^k}\Bigr)\widehat{\Lambda}\Bigl(\frac{-a}{b^k}\Bigr)\Bigr|\Bigr).
\end{align*}
Here we recall that $g(n)=e(\alpha s_b(n))$. By Dirichlet's approximation theorem, for any choice of $0<D_0$ and any $0\le a<b^k$ there exists integers $(\ell,d)=1$ with $d<D_0$ and a real $|\beta|<1/dD_0$ such that
\[\frac{a}{b^k}=\frac{\ell}{d}+\beta.\]
We see that $b^k\ell/d+b^k\beta\in\mathbb{Z}$. We choose $D_0=b^{k/2}$, and divide the sum over $a$ according to whether $\max(d,b^k|\beta|)<b^{k\delta_{\alpha,b}/4}$ or not. Those $a$ for which this bound holds can be bounded by Lemma \ref{lmm:MajorError}, whilst those for which this bound doesn't hold can be bounded by Lemma \ref{lmm:PrimeMinor}. Putting this together shows that
\begin{align*}
\sum_{\substack{n<b^k\\ s_b(n)\equiv a\Mod{m}}}\Lambda(n)
&=\frac{(1+o(1))b^k}{m}
+O\Bigl(\frac{b^{k}}{b^{k\delta_{\alpha,b}/4}}\Bigr)
+O\Bigl(\frac{k^4 b^{k}}{b^{k\delta_{\alpha,b}(1/5-\beta_b)/4}}\Bigr)\\
&\qquad +O\Bigl(\frac{b^k k^4 D_0^{1/2+2\beta_b}}{b^{k/2}}\Bigr)+O\Bigl(\frac{k^4b^{k+k\beta_b}}{D_0^{1/2}}\Bigr).
\end{align*}
Recalling that $D_0=b^{k/2}$ and that $\beta_b\rightarrow 0$ as $b\rightarrow \infty$, we see that the final two terms are negligible for $b$ sufficiently large. Recalling that by assumption $(m,b-1)=1$, we see that $\delta_{\alpha,b}=\|(b-1)\alpha\|^2/4b^4\ge 1/(4b^4m^2)$ when $\alpha=j/m$ for $1\le j\le m$. Thus for $b$ sufficiently large, first two error terms are $o(b^k)$ as well. Theorem \ref{thrm:WeakMauduit} then follows by partial summation.
\end{proof}

\begin{proof}[Proof of Theorem \ref{thrm:WeakMaynard}]
By Fourier expansion we have
\begin{align*}
\sum_{n<b^k}\Lambda(n)\mathbf{1}_{\mathcal{B}}(n)&=\frac{1}{b^k}\sum_{0\le a<b^k}\widehat{\mathbf{1}}_{\mathcal{B},b^k}\Bigl(\frac{a}{b^k}\Bigr)\widehat{\Lambda}_{b^k}\Bigl(\frac{-a}{b^k}\Bigr).
\end{align*}
We use Dirichlet's approximation theorem to write $a/b^k=\ell/d+\beta$ for some $d<D_0=b^{k/2}$ as in the proof of Theorem \ref{thrm:WeakMauduit}. We use Lemmas \ref{lmm:PrimeMajor} and \ref{lmm:MajorError} to estimate the contribution when $\max(d,b^k|\beta|)<(\log{b^k})^A$, and use Lemma \ref{lmm:PrimeMinor} for the remaining cases. This gives
\begin{align*}
&\frac{1}{b^k}\sum_{0\le a<b^k}\widehat{\mathbf{1}}_{\mathcal{B},b^k}\Bigl(\frac{a}{b^k}\Bigr)\widehat{\Lambda}_{b^k}\Bigl(\frac{-a}{b^k}\Bigr)=\kappa_b(a_0)(b-1)^k\\
&+O_A\Bigl( (b-1)^k\Bigl(\frac{1}{(\log{b^k})^A}+\frac{k^4}{(\log{b^k})^{A(1/5-\alpha_b)}}+\frac{k^5 b^{k\alpha_b}}{D_0^{1/2}}+\frac{k^5 D_0^{1/2+2\alpha_b}}{b^{k/2}}\Bigr)\Bigr).
\end{align*}
Recalling that  $D_0=b^{k/2}$ we see that the error term is $O_B((b-1)^k (\log{b^k})^{-B})$ provided $\alpha_b<1/5$ and $A$ is chosen such that $A>(B+5)/(1/5-\alpha_b)$. We recall from Lemmas \ref{lmm:ExtendedTypeI} and \ref{lmm:L1Bound} that
\[\alpha_b=\frac{\log\Bigl(C\frac{b}{b-1}\log{b}\Bigr)}{\log{b}}.\]
This clearly tends to zero as $b\rightarrow \infty$, and so in particular is less than $1/5$ for $b$ sufficiently large. Theorem \ref{thrm:WeakMaynard} then follows by partial summation.
\end{proof}

\section{Modifications for Theorem \ref{thrm:Bourgain}}
A fundamentally similar argument can also be used to prove Theorem \ref{thrm:Bourgain}. Indeed, if not too many of the prescribed digits are amongst the final digits then essentially exactly the same circle method type argument will apply. Indeed, if $\mathcal{A}$ is the set of integers with $i^{th}$ base $b$ digit is equal to $\epsilon_i$ for $i\in\mathcal{I}$, we have
\begin{align*}
\widehat{\mathbf{1}}_{\mathcal{A},b^k}(\theta)&=\sum_{n<b^k}e(n\theta)\mathbf{1}_{\mathcal{A}}(n)\\
&=\sum_{\substack{0\le n_1,\dots,n_k< b\\ n_i= \epsilon_i\,\forall i\in\mathcal{I}}}e\Bigl(\sum_{i=1}^k n_i b^{i-1}\theta\Bigr)\\
&=\prod_{i\in\mathcal{I}}e(\epsilon_i\theta)\prod_{i\notin\mathcal{I}}\Bigl(\frac{e(b^{i}\theta)-1}{e(b^{i-1}\theta)-1}\Bigr).
\end{align*}
This function enjoys many similar properties to that of $\widehat{\mathbf{1}}_{\mathcal{B},b^k}$, and in particular one can establish analogous minor arc bounds. 

Slight difficulties arise when many of the prescribed digits are amongst the final few digits. This should not be a surprise, since this is asking a generalised question of primes in arithmetic progressions, and to show the existence of primes in arithmetic progressions when the modulus is reasonably large one needs to make use of deeper facts about the density of zeros of Dirichlet $L$-functions.

If one assumes the Riemann hypothesis for Dirichlet $L$-functions, then the relevant contributions from the circle method can be handled quite easily by considering only primes in a suitable arithmetic progression (to ensure the final digits match the prescribed ones). For an unconditional result, the zero-free region for Dirichlet $L$-functions to a smooth modulus and zero density estimates serve as an adequate substitute for the Riemann Hypothesis, as is the case in many applications. For full details, we refer the reader to \cite{Bourgain}. The only care needed to generalize base 2 to general base $b$ is in section 5 of \cite{Bourgain} where Bourgain writes $q_1=2^\nu q_1'$ and splits a summation modulo $2^\nu$. Instead one can write $q_1=\tilde{q_1}q_1'$ with $(q_1',b)=1$ and $\tilde{q}_1|b^\nu$, and split the summation modulo $b^\nu$. Provided the constant $c$ of Theorem \ref{thrm:Bourgain} (which is denoted $\rho$ in \cite{Bourgain}) is sufficiently small compared with $b$, the proof then works as before, the only modification required being to replace suitable instances of 2 with $b$ and the condition `odd' with `coprime to $b$'.

\section{Acknowledgments}
The author is a fellow of Magdalen College, Oxford, and is funded by a Clay research fellowship. The thorough reading and suggestions of the anonymous referees are gratefully acknowledged.

\end{document}